\newcommand{\Mcal}{\mathcal{M}}
\numberwithin{equation}{section}
\title{Single-loop $\mathcal{O}(\epsilon^{-3})$ stochastic smoothing algorithms for nonsmooth optimization problem on Riemannian manifold}
\author{
Kangkang Deng\thanks{ Department of Mathematics,  National University of Defense Technology, Changsha, 410073,
China (\email{freedeng1208@gmail.com}).}
\and
Zheng Peng\thanks{Corresponding author. School of Mathematics and Computational Science, Xiangtan University, Xiangtan, 411105, china
(\email{pzheng@xtu.com}).} 
\and Weihe Wu\thanks{School of Microelectronics, Fudan University, Shanghai, 200433, china
(\email{1009248350@qq.com}).}  
}
\begin{document}

\maketitle
\vspace{0.5em}
\begin{center}
    \textit{May 15, 2025 (revised: May 26, 2025)}
\end{center}
\vspace{0.5em}
\begin{abstract}
In this paper, we develop two Riemannian stochastic smoothing algorithms for nonsmooth optimization problems on Riemannian manifolds, addressing distinct forms of the nonsmooth term \( h \). Both methods combine dynamic smoothing with a momentum-based variance reduction scheme in a fully online manner. When \( h \) is Lipschitz continuous, we propose an stochastic algorithm under adaptive parameter that achieves the optimal iteration complexity of \( \mathcal{O}(\epsilon^{-3}) \), improving upon the best-known rates for exist algorithms. When \( h \) is the indicator function of a convex set, we design a new algorithm using truncated momentum, and under a mild error bound condition with parameter \( \theta \geq 1 \), we establish a complexity of \( \tilde{\mathcal{O}}(\epsilon^{-\max\{\theta+2, 2\theta\}}) \), in line with the best-known results in the Euclidean setting. Both algorithms feature a single-loop design with low per-iteration cost and require only \( \mathcal{O}(1) \) samples per iteration, ensuring that sample and iteration complexities coincide. Our framework encompasses a broad class of problems and recovers or matches optimal complexity guarantees in several important settings, including smooth stochastic Riemannian optimization, composite problems in Euclidean space, and constrained optimization via indicator functions.
\end{abstract}

\begin{keywords}
Nonsmooth optimization, compact submanifold, single loop, iteration complexity
\end{keywords}

\begin{AMS}
 65K05, 65K10, 90C05, 90C26, 90C30
\end{AMS}

\section{Introduction}

In this paper, we consider a general nonsmooth optimization problem of expectation (online) minimization over Riemannian manifold $\mcM$, defined as
\begin{equation}\label{general problem}
\min_{x\in\mcM} F(x):= \mbE_{\xi\in\mathcal{D}}[\tilde{f}(x,\xi)]+h(c(x)), 
\end{equation}
where $\mcM$ is a compact Riemannian manifold embedded in $\mbR^n$, $h:\mbR^m \rightarrow\mbR$ is a nonsmooth weakly-convex function with a readily available proximal operator, and $c:\mbR^n \rightarrow \mbR^m$ is a nonlinear operator, $\xi$ is a random variable with sample space $\mcD$.   We define $f(x): = \mbE[\tilde{f}(x,\xi)]$, where each realization $\tilde{f}(x,\xi)$ is assumed to be continuously differentiable.   The model \eqref{general problem} encompasses a wide range of applications, including sparse principal component analysis (SPCA) \cite{jolliffe2003modified}, range-based independent component analysis \cite{selvan2013spherical, selvan2015range} and robust low-rank matrix completion \cite{cambier2016robust, hosseini2017riemannian}, see 
 \cite{absil2017collection,hu2020brief} for more examples.

In the case where $h = 0$, problem \eqref{general problem} reduces to a smooth stochastic optimization problem over a manifold. This special case has been extensively studied. One of the earliest methods for solving \eqref{general problem} is Riemannian stochastic gradient descent (RSGD) algorithm \cite{bonnabel2013stochastic}, which achieves an iteration complexity of $\mathcal{O}(\epsilon^{-4})$ for finding an $\epsilon$-stationary point in term of
  \begin{equation}
      \mathbb{E}[ \| \grad f(x)\|  ] \leq \epsilon.
  \end{equation}
  To reduce the complexity bounds, several works \cite{kasai2018riemannian,sato2019riemannian,zhou2019faster,han2021improved} have extended RSGD by incorporating variance reduction techniques. Recent works \cite{ijcai2021p345,demidovich2024streamlining} aim to avoid the need for double-loop structures and large batch sizes by employing stochastic recursive momentum and probabilistic gradient estimation techniques.

When \( \mathcal{M} = \mathbb{R}^n \) or is a convex set amenable to efficient projection, the problem reduces to a nonsmooth optimization problem in Euclidean space, for which a wide array of algorithms have been proposed. In cases where \( c(x) = x \) (the identity operator) or a linear operator, numerous proximal gradient methods and ADMM-type algorithms—such as those in \cite{ghadimi2016accelerated,tran2022hybrid,gao2024non,xu2023momentum,wang2019spiderboost} and \cite{huang2016stochastic,huang2019faster,deng2025stochastic}—have been developed. For the more general setting where \( c(x) \) is an arbitrary operator and \( h \) corresponds to the indicator function of a convex set \( C \), the problem becomes a stochastic constrained optimization problem. Algorithmic approaches to such problems broadly fall into two categories. The first involves penalty methods \cite{wang2017penalty,alacaoglu2024complexity,lu2024variance}, and augmented Lagrangian method (ALM)-based frameworks \cite{shi2025momentum,li2024stochastic}, which reformulate the problem into a sequence of unconstrained subproblems solvable via stochastic gradient techniques. The second category encompasses sequential quadratic programming (SQP) algorithms \cite{berahas2021sequential,curtis2024worst,curtis2021inexact}, which iteratively approximate the problem using quadratic programming subproblems.

Recently, several methods have been proposed for solving problem with $h\neq 0$ and $\mcM$ is a general compact submanifold.  In particular, Li et al. \cite{li2021weakly} focus on a class of weakly convex problems on the Stiefel manifold. They present a Riemannian stochastic subgradient method and show that it has an iteration complexity of $\mcO(\epsilon^{-4})$ for driving a natural stationarity measure below $\epsilon$. Peng et al. \cite{peng2022riemannian} propose a Riemannian stochastic smoothing method and give an iteration complexity of $\mathcal{O}(\epsilon^{-5})$ for driving a natural stationary point.    Wang et al. \cite{wang2022riemannian} proposes an Riemannian stochastic proximal gradient method for minimizing a nonsmooth function over the Stiefel manifold. They show that the proposed algorithm finds an $\epsilon$-stationary point with $\mathcal{O}(\epsilon^{-3})$ first-order oracle complexity. It should be emphasized that each oracle call involves a subroutine whose iteration complexity remains unclear.  Recent work by \cite{deng2024oracle} introduced a novel augmented Lagrangian framework enhanced with a recursive momentum-based variance reduction technique for problem \eqref{general problem} when $c(x) = \mathcal{A}x$ is a linear operator. This approach achieves an improved iteration complexity of $\mathcal{O}(\epsilon^{-3.5})$ to attain an $\epsilon$-approximate KKT point of \eqref{general problem}.

\subsection{Contributions}

\begin{itemize}
    \item

We propose two Riemannian stochastic smoothing algorithms tailored to different forms of the nonsmooth term $h$. Both algorithms integrate dynamic smoothing techniques with an online momentum-based variance reduction scheme. When $h$ is Lipschitz continuous, we develop a stochastic first-order algorithm with adaptive parameters (refer to Algorithm~\ref{alg:stomanial}), which achieves the optimal iteration complexity of $\mathcal{O}(\epsilon^{-3})$. This result improves upon the best-known complexities of existing stochastic algorithms in the Riemannian setting \cite{peng2022riemannian, li2021weakly, deng2024oracle}, and, notably, matches the best-known complexity bounds for deterministic Riemannian algorithms \cite{beck2023, deng2024oracle}. In the case where $h$ is the indicator function of a convex set, we propose a stochastic first-order algorithm (refer to Algorithm \ref{alg:stomanial-1}) based on a truncated momentum estimator. Under a mild error bound condition with parameter $\theta \geq 1$, we establish an iteration complexity of $\tilde{\mathcal{O}}(\epsilon^{-\max\{\theta+2, 2\theta\}})$, which aligns with the best-known complexity results in the Euclidean setting \cite{lu2024variance}.

    \item The proposed algorithms enjoys several favorable properties. Firstly,  both algorithms adopt a single-loop design without requiring any nested subroutines or inner loops. As a result, each iteration has low per-iteration computational cost. Secondly,  Each iteration requires only  $\mathcal{O}(1)$ samples, without relying on large mini-batches or periodic restarts. This online nature guarantees that the sample complexity and iteration complexity are  the same. Finally, our framework accommodates a wide range of problems and achieves optimal (or near-optimal) complexity in several important special cases: When $h \equiv 0$, our method recovers the smooth stochastic Riemannian optimization setting in~\cite{han2020riemannian}. 
  For the case that $\Mcal = \mathbb{R}^n$  and $h$ is a convex regularizer, our results match those of stochastic proximal gradient or stochastic ADMM algorithms in~\cite{xu2023momentum,deng2025stochastic}.
 If we consider $h$ is an indicator function, our complexity aligns with those obtained in~\cite{lu2024variance,shi2025momentum}.
    
\end{itemize}

In Table \ref{tab1}, we summarize our complexity results and several existing methods to produce
an $\epsilon$-stationary point of problem \eqref{general problem}.  The notation $\tilde{\mathcal{O}}$ denotes the dependence of a $\log$ factor.       It can easily be shown that our algorithms achieve better oracle complexity results.

\begin{table} 
   \caption{Comparison of the oracle complexity results of several methods in the literature to our method to produce an $\epsilon$-stationary point. ``compact'' means that $\Mcal$ is compact submanifold, ``Lipschitz'' denotes that $h$ is Lipschitz continuous, ``indicator'' means that $h$ is an indicator function over convex set, ``Complexity'' means iteration complexity. }
    \centering
    \small
    \begin{tabular}{|c|c|c|c|c|c|}
  \hline
  Algorithms & $c(x)$ & Manifolds & Nonsmooth & Single-loop  & Complexity \\ \hline
    subgradient \cite{li2021weakly}  &  --- &  Stiefel    &  Lipschitz  & \checkmark    &  $ \mathcal{O}(\epsilon^{-4})$ \\ \hline
    R-ProxSPB \cite{wang2022riemannian} & identity & Stiefel  & Lipschitz  & \ding{55}   & $\mathcal{O}(\epsilon^{-3})$\tablefootnote{This algorithm needs a subroutine, the overall operation complexity is unclear.} \\ \hline
 smoothing \cite{peng2022riemannian} &  linear & compact   & Lipschitz & \ding{55}    &  $\mathcal{O}(\epsilon^{-5})$ \\ \hline
stoManIAL \cite{deng2024oracle} & linear  & compact  & Lipschitz & \ding{55} &     $\tilde{\mathcal{O}}(\epsilon^{-3.5})$ \\
  \hline
   \multirow{2}{*}{ \textbf{this paper}} & \multirow{2}{*}{nonlinear} &  \multirow{2}{*}{compact } & Lipschitz  &  \multirow{2}{*}{\checkmark }  &   $\mathcal{O}(\epsilon^{-3})$\\
 & & &  indicator & &     $\tilde{\mathcal{O}}(\epsilon^{-\max\{\theta+2, 2\theta\}})$\tablefootnote{The parameter $\theta \geq 1$ comes from the error bound condition in Assumption \ref{assum:error-bound}.}\\
  \hline
\end{tabular}
 \label{tab1}
\end{table}

\subsection{Notations} 

Throughout, Euclidean space, denoted by $\mbR^n$, equipped with an inner product $\left<\cdot, \cdot\right>$ and inducing norm $\|x\| = \sqrt{\left<x, x\right>}$. Given a matrix $A$, we use $\|A\|_F$ to denote the Frobenius norm, $\|A\|_1:=\sum_{ij}\lvert A_{ij}\vert$ to denote the $\ell_1$ norm. For a vector $x$, we use $\|x\|_2$ and $\|x\|_1$ to denote Euclidean norm and $\ell_1$ norm, respectively. The indicator function of a set $\mcC$, denoted by $\delta_{\mcC}$, is set to be zero on $\mcC$ and $+\infty$ otherwise. The distance from $x$ to $\mcC$ is denoted by $\mathrm{dist}(x, \mcC): = \min_{y\in\mcC}\|x-y\|$. For a differentiable function $f$ on $\mcM$, let $\grad f(x)$ be its Riemannian gradient. If $f$ can be extended to the ambient Euclidean space, we denote its Euclidean gradient by $\nabla l(x)$.  We denote $l(t)=\mcO(l(t))$ if there exists a positive constant $M$ and $t_0$ such that $l(t)\leq Ml(t)$ for all $t\geq t_0$. We use $\tilde{\mcO}(\cdot)$ to hide poly-logarithmic factors.

\section{Preliminary} 

\subsection{Riemannian optimization} 
 An $n$-dimensional smooth manifold $\mathcal{M}$ is an   $n$-dimensional topological manifold equipped with a smooth structure, where each point has a neighborhood that is diffeomorphism to the $n$-dimensional Euclidean space. For all $x\in\mathcal{M}$, there exists a chart $(U,\psi)$ such that $U$ is an open set and $\psi$ is a diffeomorphism between $U$ and an open set $\psi(U)$ in the Euclidean space.   A tangent vector $\eta_x$ to $\mathcal{M}$ at $x$ is defined as tangents of parametrized curves $\alpha$ on $\mathcal{M}$ such that $\alpha(0) = x$ and
\[\nonumber
  \eta_x u : = \dot{\alpha}(0)u = \left.\frac{d(u(\alpha(t)))}{dt} \right\vert_{t=0} , \forall u\in  \wp_x\mathcal{M},
\]
where $\wp_x\mathcal{M}$ is the set of all real-valued functions $f$ defined in a neighborhood of $x$ in $ \mathcal{M}$. Then, the tangent space $T_x\mathcal{M}$ of a manifold $\mathcal{M}$ at $x$ is defined as the set of all tangent vectors at point $x$.  The manifold $\mathcal{M}$ is called a Riemannian manifold if it is equipped with an inner product on the tangent space $T_x\mathcal{M}$ at each $x\in\mathcal{M}$. If $\mathcal{M}$ is a Riemannian submanifold of an Euclidean space $\mathcal{E}$, the inner product is defined as the Euclidean inner product: $\left<\eta_x,\xi_x\right> = \mathrm{tr}(\eta_x^\top \xi_x)$. The Riemannian gradient $\grad  f(x) \in T_x\mathcal{M}$ is the unique tangent vector satisfying
$$  \left< \grad f(x), \xi \right> = {D}f(x)[\xi], \forall \xi\in T_x\mathcal{M}, $$
{where $Df$ denotes the differential of $f$ in $\mathbb{R}^n$.}   If $\mathcal{M}$ is a compact Riemannian manifold embedded in an Euclidean space, we have that $\grad f(x) = \mathcal{P}_{T_x\mathcal{M}}(\nabla f(x))$, where $\nabla f(x)$ is the Euclidean gradient, $\mathcal{P}_{T_x \mathcal{M}}$ is the projection operator onto the tangent space $T_x \mathcal{M}$. 
 The retraction operator is one of the most important ingredients for manifold optimization, which turns an element of $T_x\mathcal{M}$ into a point in $\mathcal{M}$.

\begin{definition}[Retraction, \cite{AbsMahSep2008}]\label{def-retr}
  A retraction on a manifold $\mathcal{M}$ is a smooth mapping $\mathcal{R}:T\mathcal{M}\rightarrow \mathcal{M}$ with the following properties. Let $\mathcal{R}_x:T_x\mathcal{M} \rightarrow \mathcal{M}$ be the restriction of $\mathcal{R}$ at $x$. It satisfies
\begin{itemize}
  \item $\mathcal{R}_x(0_x) = x$, where $0_x$ is the zero element of $T_x\mathcal{M}$,
  \item ${D}\mathcal{R}_x(0_x) = id_{T_x\mathcal{M}}$,where $id_{T_x\mathcal{M}}$ is the identity mapping on $T_x\mathcal{M}$.
\end{itemize}
\end{definition}
We have the following Lipschitz-type inequalities on the retraction on the compact submanifold.
\begin{proposition}[{\cite[Appendix B]{grocf}}]
    Let $\mathcal{R}$ be a retraction operator on a compact submanifold $\Mcal$. Then, there exist two positive constants $\alpha, \beta$ such that
    for all $x\in \mathcal{M}$ and  all $u \in T_{x}\mathcal{M}$, we have
   \begin{equation}\label{eq:retrac-lipscitz}
  \left\{ \begin{aligned}
     \|\mathcal{R}_x(u) - x\| &\leq \alpha \|u\|, \\
     \|\mathcal{R}_x(u) - x - u\| &\leq \beta \|u\|^2.
   \end{aligned}\right.
   \end{equation}
\end{proposition}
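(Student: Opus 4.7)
The plan is to exploit smoothness of the retraction $\mathcal{R}$ together with compactness of $\mathcal{M}$, handling ``small'' tangent vectors ($\|u\|\le 1$) and ``large'' ones ($\|u\|>1$) separately. Define the residual $\phi_x(u) := \mathcal{R}_x(u) - x - u$. By Definition \ref{def-retr}, $\phi_x(0_x) = 0$ and $D\phi_x(0_x) = D\mathcal{R}_x(0_x) - \mathrm{id}_{T_x\mathcal{M}} = 0$ for every $x \in \mathcal{M}$.

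In the small-vector regime, apply first-order Taylor expansion to $\mathcal{R}_x(u) - x$ and second-order Taylor expansion to $\phi_x(u)$ along the ray $t\mapsto tu$ to obtain
\[
\mathcal{R}_x(u) - x = \int_0^1 D\mathcal{R}_x(tu)[u]\,dt, \qquad \phi_x(u) = \int_0^1 (1-t)\, D^2\mathcal{R}_x(tu)[u,u]\,dt,
\]
so that $\|\mathcal{R}_x(u)-x\| \le \|u\|\sup_{t\in[0,1]}\|D\mathcal{R}_x(tu)\|_{\mathrm{op}}$ and $\|\phi_x(u)\| \le \tfrac12\|u\|^2\sup_{t\in[0,1]}\|D^2\mathcal{R}_x(tu)\|_{\mathrm{op}}$. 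The next step is to promote these suprema to bounds that are \emph{uniform} in $x$. Viewing the tangent bundle $T\mathcal{M}$ as a smooth subset of $\mathcal{M}\times\mathbb{R}^n$, the map $(x,v)\mapsto \mathcal{R}_x(v)$ is smooth, so both $\|D\mathcal{R}_x(v)\|_{\mathrm{op}}$ and $\|D^2\mathcal{R}_x(v)\|_{\mathrm{op}}$ are continuous functions of $(x,v)$. The closed unit-ball bundle $B := \{(x,v) : x\in\mathcal{M},\ \|v\|\le 1\}$ is compact (as $\mathcal{M}$ is compact and each fibre is a closed Euclidean ball), so the extreme value theorem supplies finite maxima $\alpha_1$ and $\beta_1$ of these norms over $B$. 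Hence $\|\mathcal{R}_x(u)-x\|\le \alpha_1\|u\|$ and $\|\phi_x(u)\|\le \tfrac12\beta_1\|u\|^2$ whenever $\|u\|\le 1$.

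In the large-vector regime ($\|u\|>1$), compactness of $\mathcal{M}$ directly yields a finite diameter $D := \sup_{y,y'\in\mathcal{M}}\|y-y'\|$. Since $\mathcal{R}_x(u)\in\mathcal{M}$, it follows that $\|\mathcal{R}_x(u)-x\|\le D \le D\|u\|$, and by the triangle inequality $\|\phi_x(u)\|\le D + \|u\| \le (D+1)\|u\| \le (D+1)\|u\|^2$. Setting $\alpha := \max\{\alpha_1, D\}$ and $\beta := \max\{\tfrac12\beta_1,\, D+1\}$ then yields the two stated inequalities uniformly over all $(x,u)\in T\mathcal{M}$.

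The main obstacle is obtaining the uniform quadratic constant $\beta$ in the small regime: a pointwise Taylor expansion at each fixed $x$ only produces an $x$-dependent bound, and one must lift it to a uniform bound by recognizing that the unit-ball bundle is compact and invoking continuity of the second derivative of $\mathcal{R}$. Once this compactness reduction is in place, the remaining ingredients (the Taylor identities and the trivial diameter estimate for unbounded $u$) are routine.
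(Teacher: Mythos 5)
Your proof is correct: the Taylor expansions with integral remainder, the uniform bounds obtained from compactness of the closed unit-ball bundle, and the diameter estimate for $\|u\|>1$ (valid here because the paper's Definition of a retraction takes $\mathcal{R}$ to be defined on all of $T\mathcal{M}$) fit together without gaps. The paper itself gives no proof—it cites \cite{grocf} for this fact—and your argument is essentially the same compactness-plus-Taylor-plus-diameter argument used in that reference, so there is nothing to flag.
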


Another basic ingredient for manifold optimization is vector transport.
\begin{definition}[Vector Transport, \cite{AbsMahSep2008}]
 The vector transport $\mathcal{T}$ is a smooth mapping
 \begin{equation}
   T\mathcal{M}\oplus T\mathcal{M} \rightarrow T\mathcal{M}:(\eta_x,\xi_x)\mapsto \mathcal{T}_{\eta_x}(\xi_x)\in T\mathcal{M}
 \end{equation}
 satisfying the following properties for all $x\in\mathcal{M}$:
 \begin{itemize}
   \item for any $\xi_x\in T_x\mathcal{M}$, $\mathcal{T}_{0_x}\xi_x = \xi_x$,
   \item $\mathcal{T}_{\eta_x}(a\xi_x + b\tau_x)  = a \mathcal{T}_{\eta_x}(\xi_x) + b\mathcal{T}_{\eta_x}(\tau_x)$.
 \end{itemize}
 When there exists $\mathcal{R}$ such that $y=\mathcal{R}_{x}(\eta_x)$, we write $\mathcal{T}_{x}^y(\xi_x) = \mathcal{T}_{\eta_x}(\xi_x)$. 
\end{definition}

\subsection{Moreau envelope and retraction smoothness}
We first provide the definition of proximal operator and Moreau envelope. 
\begin{definition}\label{def:moreau}
 For a proper,  convex and lower semicontinuous function $h:\mbR^m \rightarrow \mbR$, the Moreau envelope of $h$ with the parameter $\mu>0$ is given by
 \begin{equation*}
 h_{\mu}(y): = \inf_{z\in\mbR^m} \left\{h(z) + \frac{1}{2\mu }\|z- y\|^2 \right\}. 
 \end{equation*}
 The optimal solution is given by the proximal operator:
 \begin{equation}\label{def:prox}
   \prox_{\mu h}(y) = \arg\min_{z\in\mbR^m} \left\{h(z) + \frac{1}{2\mu }\|z- y\|^2 \right\}.
 \end{equation}
\end{definition}

The following result demonstrates that the Moreau envelope of a  convex function is not only continuously differentiable but also possesses a bounded gradient norm. 

\begin{proposition}\label{h proposiztion}\cite[Lemma 3.3]{bohm2021variable}
Let $h$ be a proper,  convex function. Denote $\mu >0$. Then, Moreau envelope $h_{\mu}$ has Lipschitz continuous gradient over $\mbR^n$ with constant $\mu^{-1}$, and the gradient is given by 
 \begin{equation}\label{h gradient}
 \nabla h_{\mu}(x) = \frac{1}{\mu}\left(x - \prox_{\mu h}(x) \right) \in \partial h\left(\prox_{\mu h}(x)\right).
 \end{equation}
 Moreover, if $h$ is $\ell_h$-Lipschitz continuous, it holds that
\begin{equation} \label{h bound}
\|\nabla h_{\mu}(x)\| \leq \ell_h, \; \|x - \prox_{\mu h}(x) \| \leq \mu \ell_h. 
\end{equation}
\end{proposition}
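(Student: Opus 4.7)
The plan is to establish this classical Moreau envelope result through three main steps: derive the gradient formula from the optimality condition of the prox subproblem, prove Lipschitz continuity via firm nonexpansiveness of the proximal map, and deduce the uniform bounds from the Lipschitz property of $h$.

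First, I would fix $x \in \mathbb{R}^m$ and set $z^\star := \prox_{\mu h}(x)$. Since $h$ is proper, convex, and lsc and the objective in the prox is strongly convex with parameter $\mu^{-1}$, $z^\star$ is uniquely defined and characterized by the first-order optimality condition $0 \in \partial h(z^\star) + \mu^{-1}(z^\star - x)$, i.e.\ $\mu^{-1}(x - z^\star) \in \partial h(z^\star)$. This already gives the membership in \eqref{h gradient}. To upgrade this to differentiability of $h_\mu$ with the claimed gradient, I would use the standard Danskin/Moreau argument: for any $y$, convexity of $h$ gives the quadratic lower bound $h(z) + \tfrac{1}{2\mu}\|z-y\|^2 \geq h(z^\star) + \tfrac{1}{2\mu}\|z^\star - x\|^2 + \langle \mu^{-1}(x-z^\star), y - x\rangle + \tfrac{1}{2\mu}\|y-x\|^2$ after using the subgradient inequality with the element $\mu^{-1}(x-z^\star) \in \partial h(z^\star)$ and completing the square in $z$; minimising in $z$ yields $h_\mu(y) \geq h_\mu(x) + \langle \mu^{-1}(x-z^\star), y-x\rangle$. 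Because $h_\mu$ is itself convex and this one-sided inequality holds for every $y$, the supporting slope must be the gradient, so $\nabla h_\mu(x) = \mu^{-1}(x - z^\star)$.

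Next, for the Lipschitz bound on $\nabla h_\mu$, I would invoke firm nonexpansiveness of the proximal operator of a convex function, namely $\|\prox_{\mu h}(x) - \prox_{\mu h}(y)\|^2 \leq \langle \prox_{\mu h}(x) - \prox_{\mu h}(y),\, x - y\rangle$. This is derived by writing the optimality conditions at $x$ and at $y$ and using monotonicity of $\partial h$. A direct consequence is that the complementary map $I - \prox_{\mu h}$ is also firmly nonexpansive, hence $1$-Lipschitz. Dividing by $\mu$ yields $\|\nabla h_\mu(x) - \nabla h_\mu(y)\| \leq \mu^{-1}\|x-y\|$, giving the claimed Lipschitz constant.

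Finally, the uniform bounds under $\ell_h$-Lipschitz continuity of $h$ follow from the subgradient characterization: for a convex $\ell_h$-Lipschitz function, every element of $\partial h(\cdot)$ has norm at most $\ell_h$. Since $\nabla h_\mu(x) = \mu^{-1}(x - z^\star) \in \partial h(z^\star)$ by the first step, this immediately gives $\|\nabla h_\mu(x)\| \leq \ell_h$, and multiplying through by $\mu$ yields $\|x - \prox_{\mu h}(x)\| \leq \mu \ell_h$. The main technical point deserving care is the promotion of the subgradient inclusion to a genuine gradient identity (i.e.\ differentiability of $h_\mu$); the Danskin-style sandwich via convexity of $h_\mu$ handles this cleanly without appealing to any advanced machinery, and the remainder reduces to standard properties of convex conjugates and proximal mappings.
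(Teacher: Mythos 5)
The paper itself offers no proof of this proposition; it is imported directly from \cite[Lemma 3.3]{bohm2021variable}, so the only question is whether your self-contained argument is sound. In outline it is the standard one and matches the cited source: optimality of the prox subproblem gives $\mu^{-1}\bigl(x-\prox_{\mu h}(x)\bigr)\in\partial h\bigl(\prox_{\mu h}(x)\bigr)$, firm nonexpansiveness of $\prox_{\mu h}$ (hence of $I-\prox_{\mu h}$) gives the $\mu^{-1}$-Lipschitz bound, and the bound $\sup\{\|v\|: v\in\partial h(z)\}\le \ell_h$ for an $\ell_h$-Lipschitz convex $h$ gives \eqref{h bound}. However, two points need repair. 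First, your displayed intermediate inequality is false as written: the extra term $\tfrac{1}{2\mu}\|y-x\|^2$ on the right would, after minimizing over $z$, assert $\mu^{-1}$-strong convexity of $h_\mu$, which already fails for $h\equiv 0$ (then $h_\mu\equiv 0$). The correct computation keeps $\langle \mu^{-1}(x-z^\star), z-z^\star\rangle$ inside the minimization (writing $z^\star=\prox_{\mu h}(x)$) and yields exactly $h_\mu(y)\ge h_\mu(x)+\langle \mu^{-1}(x-z^\star),y-x\rangle$, which is the inequality you actually use, so the slip is local but the display should be corrected.

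Second, and more substantively, the step ``the supporting slope must be the gradient'' does not follow from convexity of $h_\mu$ plus a one-sided supporting inequality: that only establishes $\mu^{-1}(x-z^\star)\in\partial h_\mu(x)$, and every convex function has a supporting slope at every point without being differentiable. You advertise a sandwich but supply only its lower half. Either add the upper half---use $z=z^\star$ as a feasible point in the infimum defining $h_\mu(y)$ to get $h_\mu(y)\le h_\mu(x)+\langle \mu^{-1}(x-z^\star),y-x\rangle+\tfrac{1}{2\mu}\|y-x\|^2$, which squeezes the difference quotient, gives differentiability with the claimed gradient, and incidentally re-derives the $\mu^{-1}$-smoothness---or note that a convex function admitting a continuous selection of subgradients has a singleton subdifferential everywhere, the required continuity being exactly what your firm-nonexpansiveness step provides. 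With either repair the argument is complete; the remaining steps (firm nonexpansiveness of $I-\prox_{\mu h}$ and the subgradient norm bound for Lipschitz convex functions) are correct.
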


The following lemma, stated as Lemma 4.1 in \cite{bohm2021variable}, establishes a relationship between the function values of two Moreau envelopes with distinct parameters. 
\begin{lemma}\label{h Lipschitz}\cite[Lemma 4.1]{bohm2021variable} 
 Let $h$ be a proper, closed, and  convex function. Then 
 \begin{equation}\label{eq:moreau-bound}
 h_{\mu_2}(y) \leq h_{\mu_1}(y) + \frac{1}{2} \frac{\mu_1 - \mu_2}{\mu_2} \mu_1 \left\|\nabla h_{\mu_1}(y)\right\|^2, 
 \end{equation}
 where $\mu_1$ and $\mu_2$ satisfy that $0< \mu_2 \leq \mu_1 $. In particular, consider the following two cases:
 \begin{itemize}
     \item When $h$ is Lipschitz continuous with module $\ell_h$, it holds that
     \begin{equation}\label{eq:moreau-bound-1}
 h_{\mu_2}(y) \leq h_{\mu_1}(y) + \frac{1}{2} \frac{\mu_1 - \mu_2}{\mu_2} \mu_1 \ell_h^2, 
 \end{equation}
 \item When $h$ is a indicator function on convex set $\mathcal{C}$, i.e., $h(x) = \delta_{\mathcal{C}}(x)$, it holds that
 \begin{equation}\label{eq:moreau-bound-2}
 h_{\mu_2}(y) \leq h_{\mu_1}(y) + \frac{1}{2} ( \frac{1}{\mu_2} - \frac{1}{\mu_1} )   \mathrm{dist}^2(y,\mathcal{C}).
 \end{equation}
 \end{itemize}
\end{lemma}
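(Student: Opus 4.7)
The plan is to derive the main inequality by exploiting the defining infimum structure of the Moreau envelope, then to specialize to the two cases using Proposition~\ref{h proposiztion}. Concretely, let $z_1 := \prox_{\mu_1 h}(y)$ denote the (unique) minimizer achieving $h_{\mu_1}(y)$, so that
\begin{equation*}
h_{\mu_1}(y) = h(z_1) + \frac{1}{2\mu_1}\|z_1 - y\|^2.
\end{equation*}
Since $h_{\mu_2}(y)$ is defined as an infimum over all $z \in \mathbb{R}^m$, plugging in the suboptimal choice $z = z_1$ gives
\begin{equation*}
h_{\mu_2}(y) \leq h(z_1) + \frac{1}{2\mu_2}\|z_1 - y\|^2.
\end{equation*}

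Subtracting the expression for $h_{\mu_1}(y)$ from this upper bound yields
\begin{equation*}
h_{\mu_2}(y) - h_{\mu_1}(y) \leq \frac{1}{2}\left(\frac{1}{\mu_2} - \frac{1}{\mu_1}\right)\|z_1 - y\|^2 = \frac{1}{2}\,\frac{\mu_1 - \mu_2}{\mu_1 \mu_2}\,\|z_1 - y\|^2.
\end{equation*}
Invoking the gradient formula from Proposition~\ref{h proposiztion}, namely $\nabla h_{\mu_1}(y) = \mu_1^{-1}(y - z_1)$, I can substitute $\|z_1 - y\|^2 = \mu_1^2 \|\nabla h_{\mu_1}(y)\|^2$ to obtain exactly \eqref{eq:moreau-bound}. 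Note that $\mu_1 - \mu_2 \geq 0$ is exactly what makes the right-hand side non-negative, so monotonicity of the envelope in $\mu$ is respected.

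For the first special case, when $h$ is $\ell_h$-Lipschitz, the bound $\|\nabla h_{\mu_1}(y)\| \leq \ell_h$ from \eqref{h bound} in Proposition~\ref{h proposiztion} plugs directly into \eqref{eq:moreau-bound} to yield \eqref{eq:moreau-bound-1}. For the second case with $h = \delta_{\mathcal{C}}$, the proximal operator reduces to the Euclidean projection onto $\mathcal{C}$, which is independent of $\mu$, so $z_1 = P_{\mathcal{C}}(y)$ and $\|z_1 - y\|^2 = \mathrm{dist}^2(y, \mathcal{C})$. Substituting this directly into the displayed inequality above (before replacing $\|z_1-y\|^2$ via the gradient) gives \eqref{eq:moreau-bound-2}.

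There is no real obstacle here: the single idea is to evaluate the infimum defining $h_{\mu_2}$ at the optimizer of $h_{\mu_1}$, whereupon both the common $h(z_1)$ terms cancel. The only minor care point is keeping the algebra $(1/\mu_2 - 1/\mu_1)\mu_1^2 = (\mu_1 - \mu_2)\mu_1/\mu_2$ straight, and recognizing that in the indicator-function case the proximal operator does not depend on $\mu$, so the inequality (which is in fact an equality in this case) takes the cleaner form \eqref{eq:moreau-bound-2} without needing the Lipschitz constant.
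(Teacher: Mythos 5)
Your proof is correct and follows essentially the same route as the paper: evaluate the infimum defining $h_{\mu_2}(y)$ at the suboptimal point $\prox_{\mu_1 h}(y)$, rewrite the residual term via the gradient formula $\nabla h_{\mu_1}(y)=\mu_1^{-1}(y-\prox_{\mu_1 h}(y))$, and then specialize using \eqref{h bound} in the Lipschitz case and the identity $\prox_{\mu_1 h}=\mathcal{P}_{\mathcal{C}}$ in the indicator case. Your added remark that the indicator-case bound is actually an equality is also correct.
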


\begin{proof}
    By using the definition of the Moreau envelope, we obtain
$$
\begin{aligned}
 h_{\mu_2}(y) & =\min _{u \in \mathbb{R}^n}\left\{h(u)+\frac{1}{2 \mu_2}\|y-u\|^2\right\} \\
& =\min _{u \in \mathbb{R}^n}\left\{h(u)+\frac{1}{2 \mu_1}\|y-u\|^2+\frac{1}{2}\left(\frac{1}{\mu_2}-\frac{1}{\mu_1}\right)\|y-u\|^2\right\} \\
& \leq h\left(\operatorname{prox}_{\mu_1 h}(y)\right)+\frac{1}{2 \mu_1}\left\|y-\operatorname{prox}_{\mu_1 h}(y)\right\|^2+\frac{1}{2}\left(\frac{1}{\mu_2}-\frac{1}{\mu_1}\right)\left\|y-\operatorname{prox}_{\mu_1 h}(y)\right\|^2 \\
& = h_{\mu_1}(y)+\frac{1}{2}\left(\frac{\mu_1-\mu_2}{\mu_2}\right) \mu_1\left\|\nabla h_{\mu_1}(y)\right\|^2,
\end{aligned}
$$
which proves \eqref{eq:moreau-bound}. When $h$ is Lipschitz continuous with $\ell_h$, we obtain \eqref{eq:moreau-bound-1} by applying \eqref{h bound}. If $h(x) = \delta_{\mathcal{C}}(x)$, since $\left\|y-\operatorname{prox}_{\mu_1 h}(y)\right\|^2 = \left\|y-\mathcal{P}_{\mathcal{C}}(y)\right\|^2$, which implies \eqref{eq:moreau-bound-2}.
\end{proof}

Next, we provide the definition of retraction smoothness. This concept plays a crucial role in the convergence analysis of the algorithms proposed in the subsequent sections.
\begin{definition}\cite[Retraction smooth]{grocf}\label{def:retr-smooth}
  A function $f:\mathcal{M}\rightarrow\mathbb{R}$ is said to be  retraction smooth (short to retr-smooth) with constant $L$ and a retraction $\mathcal{R}$,  if for   $\forall~x, y\in\mathcal{M}$  it holds that
  \begin{equation}\label{eq:taylor expansion}
    f(y) \leq f(x) + \left<\grad f(x), \eta\right> + \frac{L}{2}\|\eta\|^2,
  \end{equation}
  where $\eta\in T_x\mathcal{M}$ and $y= \mathcal{R}_x(\eta)$.
\end{definition}

Finally, we show that the retr-smoothness can be induced by the smoothness in usual sense. 
\begin{proposition}[Lemma 2.7, \cite{grocf}]\label{prop:retr}
    Under Assumption \ref{general assumption}, if $f$ is $L$-smooth in the convex hull of $\Mcal$ ($\text{conv}(\Mcal)$), and the gradient $\nabla f(x)$ is bounded by $G$, then $f$ is retr-smooth with constant $\alpha^2 L + 2G\beta$, where $\alpha,\beta$ are given in \eqref{eq:retrac-lipscitz}. 
\end{proposition}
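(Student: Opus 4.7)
The plan is to derive the retraction-smoothness inequality by combining the Euclidean descent lemma (which is available because $f$ is $L$-smooth on $\operatorname{conv}(\Mcal)$ and both $x$ and $y=\mathcal{R}_x(\eta)$ lie in $\Mcal\subset\operatorname{conv}(\Mcal)$) with the two retraction Lipschitz-type bounds from \eqref{eq:retrac-lipscitz}.

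First I would start from the standard Euclidean descent inequality
\begin{equation*}
f(y) \le f(x) + \langle \nabla f(x), y-x\rangle + \tfrac{L}{2}\|y-x\|^2 .
\end{equation*}
Next I would split $y-x = \eta + (y-x-\eta)$ to separate the ``first-order'' and ``retraction-error'' parts of the linear term. For the first-order part, since $\eta\in T_x\Mcal$ and $\grad f(x)=\mathcal{P}_{T_x\Mcal}(\nabla f(x))$ on an embedded submanifold, we have the identity $\langle \nabla f(x),\eta\rangle = \langle \grad f(x),\eta\rangle$, which produces exactly the gradient term required in Definition~\ref{def:retr-smooth}. For the retraction-error part, Cauchy--Schwarz combined with the bound $\|\nabla f(x)\|\le G$ and the second inequality in \eqref{eq:retrac-lipscitz} gives
\begin{equation*}
\langle \nabla f(x), y-x-\eta\rangle \;\le\; G\,\beta\,\|\eta\|^2 .
\end{equation*}

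For the quadratic term I would use the first inequality in \eqref{eq:retrac-lipscitz} to obtain $\|y-x\|^2\le \alpha^2\|\eta\|^2$, so that $\tfrac{L}{2}\|y-x\|^2 \le \tfrac{\alpha^2 L}{2}\|\eta\|^2$. Putting the three pieces together yields
\begin{equation*}
f(y) \;\le\; f(x) + \langle \grad f(x),\eta\rangle + \Bigl(\tfrac{\alpha^2 L}{2} + G\beta\Bigr)\|\eta\|^2,
\end{equation*}
which is exactly \eqref{eq:taylor expansion} with retr-smoothness constant $\alpha^2 L + 2G\beta$.

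There is no real obstacle here; the argument is essentially bookkeeping. The only step that deserves a moment of care is the justification that the Euclidean descent inequality can be applied, which requires $x$ and $y$ to lie in a set where $\nabla f$ is $L$-Lipschitz — this is why the hypothesis is stated on $\operatorname{conv}(\Mcal)$ rather than just on $\Mcal$, since in general the straight segment between $x$ and $y=\mathcal{R}_x(\eta)$ leaves the manifold but stays in its convex hull. The other subtle point is the use of the tangential projection identity for $\grad f(x)$, which needs the embedded-submanifold structure (already assumed in the paper's notation); once this is noted the rest is direct algebra.
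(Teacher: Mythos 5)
Your argument is correct and is exactly the standard proof of this result (the paper itself gives no proof, citing Lemma 2.7 of the reference, whose argument is the same: Euclidean descent lemma on $\operatorname{conv}(\Mcal)$, the split $y-x=\eta+(y-x-\eta)$, the tangential identity $\langle\nabla f(x),\eta\rangle=\langle\grad f(x),\eta\rangle$, and the two retraction bounds in \eqref{eq:retrac-lipscitz}). The constants also check out, since $\bigl(\tfrac{\alpha^2L}{2}+G\beta\bigr)\|\eta\|^2=\tfrac{1}{2}(\alpha^2L+2G\beta)\|\eta\|^2$ matches Definition~\ref{def:retr-smooth}.
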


\subsection{Stationary point and smoothing problem}

We start with the KKT condition of problem \eqref{general problem}. In particular, we introduce the auxiliary variable $y = c(x)$:
\begin{equation}
    \min_{x\in\Mcal} f(x) + h(y),~\mathrm{s.t.}~ c(x) = y. 
\end{equation}
The corresponding Lagrangian function is given as follows:
\begin{equation}
     \ell(x,y;z): = f(x) + h(y) + \langle z, c(x) - y \rangle,
\end{equation}
where $z$ is the Lagrangian multiplier. Then the KKT condition of \eqref{general problem} is given as follows \cite{deng2022manifold}: A point $x\in \mathcal{M}$ satisfies the KKT condition if there exists $y,z\in\mathbb{R}^m$ such that
\begin{equation}\label{def:kkt-deter}
    \left\{
\begin{aligned}
    0 &=  \mathcal{P}_{T_x\mathcal{M}}\left(\nabla f(x) + \nabla c(x)^\top z \right), \\
    0 & \in -z+ \partial h(y), \\
   0 & =  c(x)  -y.
\end{aligned}
    \right.
\end{equation}
Using the formulation, we give the $\epsilon$-stationary point of problem \eqref{general problem}.
\begin{definition}\label{def:epsilon}
    We say $x\in \mathcal{M}$ is a $\epsilon$-stationary point of problem \eqref{general problem} if there exists $y\in \mathbb{R}^m$ and $z\in \partial h(y)$ satisfies
    \begin{equation}\label{eq:def:epsilon}
        \mathbb{E} \left[\|\mathcal{P}_{T_x\mathcal{M}}( \nabla  f(x) +  \nabla c(x)^\top z )\| \right] \leq \epsilon,~~ \|c(x) - y\| \leq \epsilon.
    \end{equation}
\end{definition}

When $\epsilon=0$ in Definition \ref{def:epsilon}, it immediately follows that $x$ satisfies the exact KKT condition \eqref{def:kkt-deter}.  Compared with the $\epsilon$-stationarity obtained by directly relaxing the KKT conditions in \cite{deng2024oracle}, our notion of an $\epsilon$-stationary point is more restrictive: we require $z\in\partial h(y)$ exactly, whereas \cite{deng2024oracle} only ensures $\mathrm{dist}(z,\partial h(y))\le\epsilon$.   
Peng et al.\ \cite{peng2022riemannian} define $y\in\mathbb{R}^m$ to be a {\em generalized} $\epsilon$-stationary point if there exists $x\in\Mcal$ such that 
\[
\mathbb{E}\Big[\mathrm{dist}^2\left(0,\; \mathcal{P}_{T_x\Mcal}(\nabla f(y) + \nabla c(x)^\top \partial h(y))\right)\Big]\leq \epsilon,\quad 
\|x-y\|^2\le\epsilon^2.
\]
However, under this definition the point $y$ is not required to lie on the manifold $\Mcal$.  In particular, it is unclear whether the discrepancy $\|c(x)-y\|\le\epsilon$ holds.  In contrast, our definition of $\epsilon$-stationarity inherently enforces $x\in\Mcal$, making it a more appropriate notion in this setting.

Following \cite{peng2022riemannian,beck2023}, we give the smoothed approximation of problem \eqref{general problem} using Moreau envelope in \eqref{def:moreau}. In particular, 
given the smoothing parameter $\mu>0$, we define the smoothed approximation problem:
\begin{equation}\label{smoothing problem}
\min_{x\in\Mcal}F_{\mu}(x): = f(x) + h_{\mu}(c( x)). 
\end{equation}
When $f$ is differentiable, $h$ is convex and $c$ is differentiable, we obtain from Proposition \ref{h proposiztion} that $\nabla F_{\mu}$ can be written as
 \begin{equation}\label{Fmu-euclidean-gradient}
 \begin{aligned}
 \nabla F_{\mu}(x) &= \nabla f(x) + \nabla c(x)^\top \nabla h_{\mu_k}(c(x))  \\
 &= \nabla f(x) +\frac{1}{\mu} \nabla c(x)^\top(c (x) - \prox_{\mu h}(c(x))). 
 \end{aligned}
 \end{equation}
 
In this paper, we solve a sequence of smoothed problems of the form \eqref{smoothing problem}, where the smoothing parameter $\mu$ is dynamically adjusted rather than fixed. The parameter $\mu$ controls the trade-off between smoothness and approximation accuracy. At the early stages of the algorithm, we choose a relatively large value of $\mu$ to ensure favorable smoothness properties. As the iteration proceeds, we gradually decrease $\mu$ so that the smoothed problems more closely approximate the original nonsmooth problem. The detailed algorithm is presented in the next section.

\section{Smoothing algorithmic framework}\label{sec:alg}
Throughout this paper, we make the following assumptions for problem \eqref{general problem}. 
\begin{assumption}\label{general assumption} 
\text{ }
\begin{enumerate}
 [label={\textbf{\Alph*:}}, 
 ref={\theassumption.\Alph*}]
 \item\label{assumption-A}%
 The manifold $\mcM$ is a compact Riemannian submanifold embedded in $\mbR^n$.
 \item\label{assumption-B}  The function $f$ is  $L_f$-Lipschitz coninuous on $\Mcal$ and retr-smooth with a retraction operator $\mathcal{R}$ and constant $L$. The nonlinear mapping $c$ is $L_c$-Lipscitz continuous and  $L_{\nabla c}$-smooth on $\Mcal$.  
  \item\label{assumption-D} For each $\xi\in \mathcal{D}$, $\tilde{f}(x,\xi)$ is differentiable and the Riemannian gradient $\grad  \tilde{f}(x,\xi)$ is unbiased with bounded variance. That is, for all $x\in\mcM$, it satisfies for some $\sigma>0$ that
\begin{equation*}
\begin{aligned}
 & \mbE\left[ \grad  \tilde{f}(x,\xi) \right] = \grad f(x), ~~\mbE\left[ \|\grad  \tilde{f}(x,\xi) - \grad f(x) \|^2\right] \leq \sigma^2.
\end{aligned}
\end{equation*} 
\item\label{assumption-E} The function $\tilde{f}(x,\xi)$ satisfies the average smoothness condition: for any $x\in\Mcal, \zeta\in T_x \Mcal, y = \mathcal{R}_x(\zeta)$, it holds that
  \begin{equation}\label{assum:f}
   \mathbb{E} [\| \grad \tilde{f}(x,\xi) - \mcT_{y}^{x} \grad f(y,\xi)  \|] \leq \tilde{L}\|\zeta\|.
 \end{equation}
     \item\label{assumption-F} The function $F(x) = f(x) + h(c(x))$ has a optimal solution $x_*\in \Mcal$ and the optimal value $F_*$ is finite. 
\end{enumerate}
\end{assumption}

\begin{remark}
We make some remarks on Assumption \ref{general assumption}.  Assumption \ref{assumption-A} includes many common manifolds, such as sphere, Stiefel manifold and Oblique manifold, etc. This implies $\Mcal$ is a bounded and closed set, i.e., there exists a finite constant $D$ such that $D = \max_{x,y\in \Mcal} \| x - y  \|$. Assumption \ref{assumption-B} imply that for any $x,y\in\Mcal$, it holds that
        \begin{equation}
            \begin{aligned}
                \|\nabla c(x)\| \leq L_c,  ~~
                \|\nabla f(x) \| \leq L_f. 
            \end{aligned}
        \end{equation}
        Since $\Mcal$ is compact, it can be easily satisfied.   Assumption \ref{assumption-D} is standard and implies that $\grad \tilde{f}(x,\xi)$ is an unbiased estimator of $\grad f(x)$ and has a bounded variance for any $x\in\Mcal$.   Assumption \ref{assumption-E} is often imposed in the literature to design stochastic algorithms \cite{cutkosky2019momentum,levy2021storm+,lu2024variance,han2020riemannian}.

\end{remark}

We do not impose any assumptions on \( h \) at this point. In the subsequent algorithmic design and convergence analysis, we will separately consider two different assumptions on \( h \). The first scenario assumes $h$ to be Lipschitz continuous, while the second considers $h$ as the indicator function of a convex set $\mathcal{C}$, i.e., $h(x) = \delta_{\mathcal{C}}(x)$.

\subsection{The proposed algorithm for Lipschitz continuous function}\label{sec:lipsch}

This subsection considers the case that $h$ is Lipschitz continuous. We first give the following assumption.
\begin{assumption}\label{assumption-h}
The function $h$ is  convex and $\ell_h$-Lipschitz continuous.  
\end{assumption}

Under this assumption, we propose a stochastic smoothing method with a recursive momentum scheme for solving problem~\eqref{general problem}. Our method builds upon the frameworks of \cite{peng2022riemannian,beck2023}, but introduces a substantially different strategy for constructing the stochastic gradient estimator, leading to improved convergence guarantees over existing stochastic methods \cite{peng2022riemannian,deng2024oracle}.  Starting from an initial point $x_0 \in \Mcal$, we approximately solve a sequence of smoothed subproblems of the form $\min_{x \in \Mcal} F_{\mu_k}(x)$, as defined in \eqref{smoothing problem}, by performing a single stochastic gradient descent step:
\begin{equation}
    x_{k+1} = \mathcal{R}_{x_k}(-\tau_k G_k),
\end{equation}
where $\mathcal{R}_{x_k}$ is a retraction at $x_k$, $\mu_k$ is a smoothing parameter, and $\tau_k$ is an adaptive stepsize. Motivated by \cite{levy2021storm+}, we propose the following update rule:
\begin{equation}\label{eq:constant-cnd}
    \tau_k = \frac{1}{\left(\sum_{i=1}^k \|G_i\|^2 / a_{k+1}\right)^{1/3}}.
\end{equation}
Notice that we use $a_{k+1}$ instead of $a_{i+1}$, which is crucial for the convergence analysis. To approximate the Riemannian gradient $\grad F_{\mu_k}(x_k)$, we exploit its structure:
\[
    \grad F_{\mu_k}(x_k) = \grad f(x_k) + \mathcal{P}_{T_{x_k} \Mcal}\left( \nabla c(x_k)^\top \nabla h_{\mu_k}(c(x_k)) \right),
\]
and define the estimator $G_k$ as:
\[
    G_k = \delta_k + \mathcal{P}_{T_{x_k} \Mcal}\left( \nabla c(x_k)^\top \nabla h_{\mu_k}(c(x_k)) \right),
\]
where $\delta_k$ is a variance-reduced estimator of $\grad f(x_k)$. In particular, we compute $\delta_k$ via a recursive momentum technique inspired by \cite{cutkosky2019momentum}:
\begin{equation}
    \delta_k = \grad \tilde{f}(x_k,\xi_k) + (1 - a_k)\, \mathcal{T}_{x_{k-1}}^{x_k} \left( \delta_{k-1} - \grad \tilde{f}(x_{k-1}, \xi_k) \right),
\end{equation}
where $a_k \in (0,1]$ is the momentum parameter, $\xi_k$ is a randomly drawn sample, and $\mathcal{T}_{x_{k-1}}^{x_k}$ denotes a vector transport operator from $T_{x_{k-1}} \Mcal$ to $T_{x_k} \Mcal$. The detailed process is referred to Algorithm \ref{alg:stomanial}.

\begin{algorithm}[H]
\footnotesize
\caption{A Riemannian stochastic smoothing method with recursive momentum for problem \eqref{general problem}.}\label{alg:stomanial}
\begin{algorithmic}[1]
\REQUIRE Initial point $x_0\in \mathcal{M}$, $\{a_k\}\subset (0,1]$, $\{\mu_k\}_k \subset(0,\infty)$. 
\WHILE {$k =1,2,\cdots,$}
\STATE $G_k = \delta_k + \mathcal{P}_{T_{x_k} \Mcal}( \nabla c(x_k)^\top \nabla h_{\mu_k}(c(x_k)) )$.
\STATE Update $    \tau_k = 1/(\sum_{i=1}^k\|G_i\|^2/a_{k+1})^{1/3}$.
\STATE Update $x_{k+1}=\mathcal{R}_{x_k}(-\tau_k G_k)$.
\STATE  Sample $\xi_{k+1} \sim \mathcal{D}$ and compute the gradient $\grad f(x_{k+1},\xi_{k+1} )$.
\STATE Compute the momentum estimator: 
 $$ \delta_{k+1}= \grad f(x_{k+1}, \xi_{k+1}) + (1 -a_{k+1})\mcT_{x_k}^{x_{k+1}}(\delta_k-\grad \tilde{f}(x_k, \xi_{k+1})).$$
\ENDWHILE
\end{algorithmic}
\end{algorithm}

We highlight several key distinctions between Algorithm~\ref{alg:stomanial} and existing stochastic methods for nonsmooth optimization on compact submanifolds. Most notably, our construction of the stochastic gradient estimator departs fundamentally from those in \cite{deng2024oracle,peng2022riemannian}. In particular, the algorithm in \cite{deng2024oracle} applies a momentum scheme directly to the augmented Lagrangian function $\mathcal{L}_\rho$, thereby introducing uncontrolled variance stemming from its deterministic components (analogous to the smoothed term $h_\mu(x)$ in our Algorithm~\ref{alg:stomanial}). This issue becomes particularly severe when the penalty parameter $\rho$ is large, which impedes effective variance reduction and renders the gradient estimator $G_k$ less reliable. Similarly, the method proposed in \cite{peng2022riemannian} shares this limitation and, in addition, does not incorporate any variance reduction mechanisms. Another major distinction lies in algorithmic structure: both \cite{peng2022riemannian} and \cite{deng2024oracle} rely on double-loop frameworks (although \cite{peng2022riemannian} provides a theoretically weaker single-loop variant), whereas our Algorithm~\ref{alg:stomanial} achieves a fully single-loop implementation without sacrificing convergence guarantees.

These structural differences in gradient estimator design and algorithmic formulation result in significant improvements in complexity bounds. Our method attains the optimal sample complexity of $\mathcal{O}(\epsilon^{-3})$, improving upon the $\mathcal{O}(\epsilon^{-5})$ and $\mathcal{O}(\epsilon^{-3.5})$ rates reported in \cite{peng2022riemannian} and \cite{deng2024oracle}, respectively. Moreover, our analysis extends seamlessly to the case where $h$ is an indicator function, yielding the same optimal complexity bound. This result is consistent with recent findings in Euclidean settings, such as those established in \cite{lu2024variance}, as we detail in the next subsection.

We are now ready to present the convergence results for Algorithm \ref{alg:stomanial}, with the proof deferred to Section \ref{sec:proof-1}. 

\begin{theorem}\label{Theorem of R2SRM}
 Suppose that Assumptions \ref{general assumption} and \ref{assumption-h} hold. Let $\{x_k\}$ be the sequence generated by Algorithm \ref{alg:stomanial}.  If $\mu_k = k^{-1/3}$, $a_1 = 1$ and $a_{k+1}= k^{-2/3}$ for $k\geq 1$.
   Then we have that for given $K>1$
   \begin{equation}\label{eq:bound-grad-F-k}
   \sum_{k=1}^K  \mathbb{E}[\|\grad F_{\mu_k}(x_k)\|^2] \leq (864\sqrt{3} \tilde{L}^3 + 96\sigma^2 + 64 B^{3/2} + 9^4 \mathcal{G}^3)K^{1/3},
\end{equation}
where $B=(L_f + L_c)D + \frac{5}{2} \ell_h^2 \mu_1$ and $\mathcal{G}$ is given in Lemma \ref{Euclidean vs manifold1}. 
\end{theorem}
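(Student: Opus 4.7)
\textbf{Proof plan for Theorem \ref{Theorem of R2SRM}.}

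The overall strategy is a descent-plus-error-control argument adapted to three coupled nonstandard features of Algorithm~\ref{alg:stomanial}: (i) the objective of interest changes from iteration to iteration because $\mu_k$ varies; (ii) the update direction $G_k$ is a biased/noisy approximation of $\grad F_{\mu_k}(x_k)$ whose error is carried by a Riemannian STORM-style momentum $\delta_k$; and (iii) the step size $\tau_k$ is adaptive, of STORM$+$ type, using $\|G_i\|^2/a_{k+1}$ rather than $a_{i+1}$. First I would set up a one-step descent inequality for the moving target $F_{\mu_k}(x_k)$. Using Proposition~\ref{prop:retr} to express $F_{\mu_k}$ as retr-smooth with a constant of order $\alpha^2 L_{\mu_k}+2G_{\mu_k}\beta$ (with $\mu_k$-dependence coming through $\nabla h_{\mu_k}$), I would obtain
\begin{equation*}
F_{\mu_k}(x_{k+1}) \leq F_{\mu_k}(x_k) - \tau_k\langle \grad F_{\mu_k}(x_k), G_k\rangle + \tfrac{L_{\mu_k}}{2}\tau_k^2\|G_k\|^2,
\end{equation*}
and then compare $F_{\mu_{k+1}}(x_{k+1})$ to $F_{\mu_k}(x_{k+1})$ via \eqref{eq:moreau-bound-1}, which introduces a controllable term proportional to $\frac{\mu_k-\mu_{k+1}}{\mu_{k+1}}\mu_k\ell_h^2$. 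With the choice $\mu_k=k^{-1/3}$, summing this discrepancy over $k$ yields an $\mathcal{O}(K^{1/3})$ contribution, which matches the target scaling.

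Second, I would split $\langle \grad F_{\mu_k}(x_k),G_k\rangle$ using $G_k-\grad F_{\mu_k}(x_k) = \delta_k - \grad f(x_k)$, so that the progress inequality contains $\|\grad F_{\mu_k}(x_k)\|^2$ on one side and an error term $e_k:=\|\delta_k-\grad f(x_k)\|^2$ on the other. The heart of the argument is a recursive variance bound for $e_k$ in the Riemannian setting. Starting from
\begin{equation*}
\delta_{k+1}-\grad f(x_{k+1}) = (1-a_{k+1})\mcT_{x_k}^{x_{k+1}}\bigl(\delta_k-\grad f(x_k)\bigr) + \text{bias}_k + \text{noise}_k,
\end{equation*}
and using Assumption~\ref{assumption-E} together with the retraction Lipschitz bound \eqref{eq:retrac-lipscitz}, the standard STORM computation (conditioning on $\xi_{k+1}$) produces
\begin{equation*}
\mathbb{E}[e_{k+1}] \leq (1-a_{k+1})^2\mathbb{E}[e_k] + 2a_{k+1}^2\sigma^2 + 2\tilde{L}^2\alpha^2\tau_k^2\mathbb{E}[\|G_k\|^2].
\end{equation*}
With $a_{k+1}=k^{-2/3}$, a careful telescoping (multiplying by an appropriate weight) bounds $\sum_k a_{k+1}\mathbb{E}[e_k]$ in terms of $\sigma^2\sum_k a_{k+1}^2/a_k$ and $\sum_k \tau_k^2 \mathbb{E}[\|G_k\|^2]/a_k$. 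The first is $\mathcal{O}(K^{1/3})$; the second is where the adaptive step size pays off.

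Third, and this is the \emph{main obstacle}, I would analyze $\sum_k \tau_k^2\|G_k\|^2$ and $\sum_k \tau_k\|G_k\|^2$ under the rule $\tau_k=(\sum_{i=1}^k\|G_i\|^2/a_{k+1})^{-1/3}$. The technical lever here is the integral estimate
\begin{equation*}
\sum_{k=1}^K \frac{\|G_k\|^2}{S_k^{2/3}} \leq 3\,S_K^{1/3},\qquad S_k:=\sum_{i=1}^k\|G_i\|^2,
\end{equation*}
(and its $\tau_k$-weighted cousin), combined with the fact that $a_{k+1}$ is factored out of the inner sum in \eqref{eq:constant-cnd}. Because $\|G_k\|$ can be bounded deterministically via $\|\grad f\|\le L_f$, $\|\nabla h_{\mu_k}\|\le\ell_h$, $\|\nabla c\|\le L_c$ (after controlling the momentum drift via Lemma~\ref{Euclidean vs manifold1}), one gets $S_K\le\mathcal{O}(K)$, hence $S_K^{1/3}\le\mathcal{O}(K^{1/3})$. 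The delicate point is that $\tau_k$ is itself a random quantity entangled with $G_k$ through $S_k$, so one must be careful to keep the sums pathwise before taking expectation, and to justify the Young/Hölder split that trades $\mathbb{E}[\tau_k \|G_k\|^2]$ against $\mathbb{E}[\|\grad F_{\mu_k}(x_k)\|^2/\tau_k^{-1}]$ so that the $\|\grad F_{\mu_k}(x_k)\|^2$ terms accumulate cleanly on the left while the residuals stay $\mathcal{O}(K^{1/3})$.

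Finally, combining the three ingredients: summing the descent-plus-Moreau-shift inequality from $k=1$ to $K$, lower-bounding the left-hand side by $F_*-F_{\mu_1}(x_1)$ using Assumption~\ref{assumption-F}, and substituting the variance and adaptive-step-size bounds with $a_{k+1}=k^{-2/3}$ and $\mu_k=k^{-1/3}$, each residual piece contributes $\mathcal{O}(K^{1/3})$ with an explicit constant. Collecting them and isolating $\sum_k\mathbb{E}[\|\grad F_{\mu_k}(x_k)\|^2]$ gives the stated bound with the prefactors $864\sqrt{3}\tilde{L}^3$, $96\sigma^2$, $64B^{3/2}$, and $9^4\mathcal{G}^3$ arising from the three Young-inequality splits (powers $3$ and $3/2$ explain the cube-root/$3/2$-power pattern). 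The chief technical subtlety, once more, is the simultaneous management of the adaptive step size and the moving smoothing parameter so that neither destroys the $\mathcal{O}(K^{1/3})$ accounting.
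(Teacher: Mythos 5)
Your steps 1--2 (retr-smooth descent on the moving objective $F_{\mu_k}$, the Moreau-shift comparison via \eqref{eq:moreau-bound-1}, and the Riemannian STORM recursion for $\epsilon_k=\delta_k-\grad f(x_k)$) match the paper's proof. The genuine gap is in your step 3, where you propose to close the argument by bounding $S_K=\sum_{k\le K}\|G_k\|^2$ deterministically by $\mathcal{O}(K)$. First, that bound is not available here: Algorithm \ref{alg:stomanial} does not truncate $\delta_k$ (unlike Algorithm \ref{alg:stomanial-1}), and Assumption \ref{assumption-D} only gives bounded variance, not a uniform bound on $\grad\tilde f(x,\xi)$; the recursion can let $\|\delta_k\|$ drift on the order of $1/a_k$. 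Second, even granting $S_K=\mathcal{O}(K)$, it is too weak to yield the claimed rate: after dividing the descent inequality by the random adaptive $\tau_k$, the residual terms are of the form $2B\,a_{K+1}^{-1/3}S_K^{1/3}=2BK^{2/9}S_K^{1/3}$ and $\tfrac32\mathcal{G}K^{1/9}S_K^{2/3}$ (cf.\ \eqref{eq:bound-gradF-k}), which under $S_K=\mathcal{O}(K)$ give $\mathcal{O}(K^{5/9})$ and $\mathcal{O}(K^{7/9})$, not $\mathcal{O}(K^{1/3})$; the same problem hits the term $K^{2/9}(\sum_k\|G_k\|^2)^{1/3}$ in the error bound of Lemma \ref{Estimation error bound-1}.

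The missing idea is that $S_K$ must be bounded \emph{self-referentially}, not a priori. The paper uses $G_k=\epsilon_k+\grad F_{\mu_k}(x_k)$, hence $\sum_k\|G_k\|^2\le 2\sum_k\|\epsilon_k\|^2+2\sum_k\|\grad F_{\mu_k}(x_k)\|^2$, and splits into two cases according to which of the two sums dominates. In each case one obtains an inequality of the form $x\le c\,K^{2/9}x^{1/3}+d\,K^{1/9}x^{2/3}+e\,K^{1/3}$ for the dominating quantity $x$, which is then solved by the Young-type Lemma \ref{lem:bound-x}; this is exactly where the constants $864\sqrt3\tilde L^3$, $96\sigma^2$, $64B^{3/2}$, $9^4\mathcal{G}^3$ come from. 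You gesture at a ``Young/H\"older split'' but never state this self-bounding mechanism, and your deterministic bound would replace it with an argument that cannot reach $K^{1/3}$. A secondary omission: because $\tau_k$ is random and decreasing, the telescoping of $(\Psi_k-\Psi_{k+1})/\tau_k$ requires the uniform bound $\Psi_k\le B$ (established from Lipschitzness of $f,c,h$ and compactness of $\Mcal$), which is why $B$ enters the final constant; lower-bounding the telescoped sum by $F_*-F_{\mu_1}(x_1)$ alone, as you suggest, does not suffice once you divide by $\tau_k$ to isolate the unweighted sum $\sum_k\mathbb{E}\|\grad F_{\mu_k}(x_k)\|^2$.
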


The following result is an immediate consequence of Theorem \ref{Theorem of R2SRM}. It provides iteration complexity
results for Algorithm \ref{alg:stomanial} to find an $\epsilon$-stochastic stationary point $x_k$ of problem \eqref{general problem} satisfying \eqref{eq:def:epsilon}.

\begin{corollary}
Under the same setting as in Theorem \ref{Theorem of R2SRM},   let $i_K$ be the random variable uniformly generated from $\{\lceil K/2 \rceil,\cdots,K\}$.  Then we have for any $\epsilon>0$, there exists some $K = {\mathcal{O}}(\epsilon^{-3})$ such that
    \begin{equation}
        \mathbb{E}\left[\|\mathcal{P}_{T_{x_{i_K}}\Mcal}(\nabla f(x_{i_K}) +  \nabla c(x_{i_K})^\top z_{i_K})\|\right]\leq \epsilon,~~ \|c(x_{i_K}) - y_{i_K}\| \leq \epsilon,
    \end{equation}
    where $y_{i_K} = \prox_{\mu_{i_K} h}(c(x_{i_K}))$ and $z_{i_K} = \frac{1}{\mu_{i_K}} (c(x_{i_K})  - y_{i_K}) \in \partial h(y_{i_K})$. 
    \end{corollary}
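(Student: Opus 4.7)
The plan is to derive both $\epsilon$-stationarity inequalities as direct consequences of the bound \eqref{eq:bound-grad-F-k} established in Theorem~\ref{Theorem of R2SRM} together with the structural identities for the Moreau envelope. First I would identify $\grad F_{\mu_k}(x_k)$ with the target stationarity residual. Using the formula \eqref{Fmu-euclidean-gradient} for $\nabla F_{\mu_k}$ and the fact that on a compact submanifold $\grad F_{\mu_k}(x_k) = \mathcal{P}_{T_{x_k}\Mcal}(\nabla F_{\mu_k}(x_k))$, one obtains
\begin{equation*}
\grad F_{\mu_k}(x_k) = \mathcal{P}_{T_{x_k}\Mcal}\bigl(\nabla f(x_k) + \nabla c(x_k)^\top z_k\bigr),
\end{equation*}
where $z_k = \nabla h_{\mu_k}(c(x_k)) = \tfrac{1}{\mu_k}(c(x_k)-y_k)\in\partial h(y_k)$ by Proposition~\ref{h proposiztion}. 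Thus the first quantity we must bound coincides exactly with $\|\grad F_{\mu_k}(x_k)\|$.

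Next I would average the bound \eqref{eq:bound-grad-F-k} over the second half of the iterations. Denoting the constant on the right-hand side by $C$, the telescoping sum implies
\begin{equation*}
\mathbb{E}\bigl[\|\grad F_{\mu_{i_K}}(x_{i_K})\|^2\bigr] = \frac{1}{\lfloor K/2\rfloor+1}\sum_{k=\lceil K/2\rceil}^{K} \mathbb{E}\bigl[\|\grad F_{\mu_k}(x_k)\|^2\bigr] \le \frac{2CK^{1/3}}{K} = 2C K^{-2/3}.
\end{equation*}
Applying Jensen's inequality then yields $\mathbb{E}[\|\grad F_{\mu_{i_K}}(x_{i_K})\|] \le \sqrt{2C}\,K^{-1/3}$. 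For the feasibility-type residual, I would invoke the bound \eqref{h bound} from Proposition~\ref{h proposiztion}, which gives $\|c(x_{i_K})-y_{i_K}\| = \|c(x_{i_K}) - \prox_{\mu_{i_K} h}(c(x_{i_K}))\| \le \mu_{i_K}\ell_h$. Since $i_K \ge \lceil K/2\rceil$ and $\mu_k = k^{-1/3}$ is monotonically decreasing, this gives the deterministic bound $\|c(x_{i_K})-y_{i_K}\| \le 2^{1/3}\ell_h K^{-1/3}$.

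Finally I would choose $K$ so that both $K^{-1/3}$ bounds fall below $\epsilon$. Taking $K = \mathcal{O}(\epsilon^{-3})$ with a constant that dominates $\max\{(2C)^{3/2},\ 2\ell_h^3\}$ ensures that both inequalities in the corollary hold simultaneously, completing the proof. There is no substantive obstacle here beyond keeping track of constants; the argument is a straightforward consequence of Theorem~\ref{Theorem of R2SRM}, the identification of $\grad F_{\mu_k}$ with the KKT residual measured at $(y_k,z_k)$, and the Lipschitz-continuous case of the Moreau envelope gradient bound.
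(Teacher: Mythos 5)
Your proposal is correct and follows essentially the same route as the paper's own proof: identify $\grad F_{\mu_k}(x_k)$ with $\mathcal{P}_{T_{x_k}\Mcal}(\nabla f(x_k)+\nabla c(x_k)^\top z_k)$ via the Moreau-envelope gradient formula, average the bound \eqref{eq:bound-grad-F-k} over the second half of the iterates and apply Jensen to get an $\mathcal{O}(K^{-1/3})$ bound on the expected residual, and use \eqref{h bound} with $\mu_k=k^{-1/3}$ for the feasibility gap. The only differences are immaterial constants (e.g.\ $2^{1/3}\ell_h$ versus the paper's $2\ell_h$).
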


\begin{proof}

It follows from \eqref{h bound} and  $y_k = \prox_{\mu_k h}(c(x_k))$ that
 \begin{equation}\label{bound-cx-y}
     \|c(x_k) - y_k\|^2 \leq \mu_k^2 \ell_h^2. 
 \end{equation}
Substituting $\mu_k = k^{-1/3}$ and  applying the definition of $i_K$ yields
 \begin{equation}
     \begin{aligned}
       \|c(x_{i_K}) - y_{i_K}\|  \leq (i_K)^{-1/3} \ell_h \leq (\lceil K/2\rceil)^{-1/3} \ell_h \leq 2 \ell_h K^{-1/3}.
     \end{aligned}
 \end{equation}
Using the bound \eqref{eq:bound-grad-F-k} and applying the definition of $i_K$ and Jensen’s inequality gives
\begin{equation}
\begin{aligned}
   \mathbb{E}\left[ \| \grad F_{\mu_{i_K}}(x_{i_K}) \|\right] 
   &\leq \sqrt{ \mathbb{E}\left[   \| \grad F_{\mu_{i_K}}(x_{i_K}) \|^2\right]} \\
   &= \sqrt{  \frac{1}{K - \lceil K/2 \rceil +1} \sum_{k=\lceil K/2 \rceil }^K  \mathbb{E}  \| \grad F_{\mu_{k}}(x_{k}) \|^2  }\\
   & = \sqrt{2 (863\sqrt{3} \tilde{L}^3 + 96\sigma^2 + 64 B^{3/2} + 9^4 \mathcal{G}^3)} K^{-1/3}.
   \end{aligned}
\end{equation}
Moreover, by the definition of $z_{i_K}$, we obtain
\begin{equation}
\begin{aligned}
    \grad F_{\mu_{i_K}}(x_{i_K}) &= \mathcal{P}_{T_{x_{i_K}}\Mcal}(\nabla f(x_{i_K}) + \frac{1}{\mu_{i_K}} \nabla c(x_{i_K})^\top (c(x_{i_K})  - y_{i_K})) \\
    &= \mathcal{P}_{T_{x_{i_K}}\Mcal}(\nabla f(x_{i_K}) +  \nabla c(x_{i_K})^\top z_{i_K}).
    \end{aligned}
\end{equation}
This completes the proof.
\end{proof}

\begin{remark}
Since Algorithm \ref{alg:stomanial} is a single-loop algorithm, and it requires one sample, one gradient evaluation of $c$, and two gradient evaluations of $\tilde{f}$ per iteration, its sample and first-order operation complexity are of the
same order as its iteration complexity. Additionally, our algorithm is a parameter-free momentum based method that ensures the optimal iteration complexity of $\mathcal{O}(\epsilon^{-3})$. Consider the smooth cases, i,e., $h = 0$, our algorithm improve over the Riemannian stochastic recursive momentum method \cite{han2020riemannian} by shaving off a $\log(K)$ factor, which is due to we use the improved momentum technique from \cite{levy2021storm+}.

\end{remark}

\subsection{The proposed algorithm for indicator function}\label{sec:indicator}
This subsection considers the case that $h$ is an indicator function over some convex set $\mathcal{C}$.  In particular, we make the following assumption.
\begin{assumption}\label{assum:h-2}
    The function $h$ in \eqref{general problem} is an indicator function over a convex set $\mathcal{C}$, i.e., $h(x) = \delta_{\mathcal{C}}(x)$. 
\end{assumption}

Under Assumption \ref{assum:h-2},  problem \eqref{general problem} reduces to the following smooth optimization on compact submanifold with extra constraint:
\begin{equation}\label{prob:constrint}
    \min_{x\in \Mcal} f(x),~\mathrm{s.t.},~c(x)\in \mathcal{C}. 
\end{equation}
The proximal operator $\prox_{h}(x)$ reduces to the projection operator $\mathcal{P}_{\mathcal{C}}(x)$. Similarly, the Moreau envelope simplifies to the scaled squared distance function $\frac{1}{2\mu}\text{dist}^2(x,\mathcal{C})$. Consequently, the corresponding smooth approximation of $F$ is given as follows:
\begin{equation}
    F_{\mu}(x) = f(x) + \frac{1}{2\mu}\text{dist}^2(c(x),\mathcal{C}).
\end{equation}
This is equivalent to a sequence of quadratic penalty functions and $1/\mu$ is the penalty parameter. Finally, one can obtains the Euclidean gradient of $F_{\mu}(x)$:
\begin{equation}
    \nabla F_{\mu}(x):= \nabla f(x) + \frac{1}{\mu} \nabla c(x)^\top\left[c(x) - \mathcal{P}_{\mathcal{C}}(c(x))\right].
\end{equation}

Since that $\partial h(x) = N_{\mathcal{C}}(x)$ for any $x\in \mathcal{C}$. Following Definition \eqref{def:epsilon}, we give the definition of $\epsilon$-stationary point for \eqref{prob:constrint}. 

\begin{definition}\label{def:epsilon-1}
    We say $x\in \mathcal{M}$ is a $\epsilon$-stationary point of problem \eqref{prob:constrint} if there exists $y\in \mathcal{C}$ and $z\in N_{\mathcal{C}}(y) $ satisfies
    \begin{equation}\label{eq:def:epsilon-1}
        \mathbb{E}[\|\mathcal{P}_{T_x\mathcal{M}}( \nabla  f(x) +  \nabla c(x)^\top z )\|] \leq \epsilon,~~  \mathrm{dist}(c(x),\mathcal{C}) \leq \epsilon.
    \end{equation}
\end{definition}

Following Algorithm~\ref{alg:stomanial}, we present Algorithm~\ref{alg:stomanial-1} for solving problem~\eqref{general problem} under Assumption~\ref{assum:h-2}. This algorithm shares the same structure as Algorithm~\ref{alg:stomanial}, except for the update rule of the estimator $\delta_k$. Specifically, we employ the truncated recursive momentum scheme to compute $\delta_k$ as
\begin{equation}\label{def:delta:indicator}
    \delta_k = \mathcal{P}_{\mathcal{B}_{L_f}}\left( \grad \tilde{f}(x_k,\xi_k) + (1-a_{k-1})\mcT_{x_{k-1}}^{x_k}(\delta_{k-1} - \grad \tilde{f}(x_{k-1},\xi_k)) \right),
\end{equation}
where $a_{k-1} \in (0,1]$ is the momentum parameter, $\xi_k$ is a randomly drawn sample, and $\mathcal{P}_{\mathcal{B}_{L_f}}$ denotes the projection onto the ball of radius $L_f$ (centered at the origin) in the tangent space. This projection ensures that $\delta_k$ remains bounded, which is crucial for controlling the variance of the estimator in the convergence analysis. Note that the algorithm does not require the exact value of $L_f$ in advance. In fact, without loss of generality, $L_f$ may be replaced by any constant larger than the true Lipschitz constant. For notational simplicity, we use $L_f$ throughout.

\begin{algorithm}[H]
\footnotesize
\caption{A Riemannian stochastic smoothing method with truncated recursive momentum for problem \eqref{prob:constrint}.}\label{alg:stomanial-1}
\begin{algorithmic}[1]
\REQUIRE Initial point $x_0\in \mathcal{M}$, $\{\mu_k\}_k, \{ a_k \}_k,\{ \tau_k \}_k $. Set $k=0$. 
\WHILE {$k =0, 1,2,\cdots,$}
\STATE $G_k = \delta_k + \mathcal{P}_{T_{x_k} \Mcal}( \nabla c(x_k)^\top \nabla h_{\mu_k}(c(x_k)) )$.
\STATE Update $x_{k+1}=\mathcal{R}_{x_k}(-\tau_k G_k)$.
\STATE  Sample $\xi_{k+1} \sim \mathcal{D}$ and compute the gradient $\grad f(x_{k+1},\xi_{k+1} )$.
\STATE Compute the momentum estimator: 
 $$ \delta_{k+1}= \mathcal{P}_{\mathcal{B}_{L_f}}\left(\grad f(x_{k+1}, \xi_{k+1}) + (1 -a_{k+1})\mcT_{x_k}^{x_{k+1}}(\delta_k-\grad \tilde{f}(x_k, \xi_{k+1}))\right).$$
\ENDWHILE
\end{algorithmic}
\end{algorithm}

Since the indicator function associated with the constraint set $\mathcal{C}$ is not Lipschitz continuous, a regularity condition is often required to facilitate algorithmic analysis. Following standard practice in constrained optimization, we introduce a distance function
\[\label{def:g}
g(x) := \frac{1}{2} \mathrm{dist}^2(c(x), \mathcal{C}).
\]

\begin{assumption}\label{assum:error-bound}
There exist constants $\zeta > 0$ and $\theta \geq 1$ such that the following inequality holds for all $x$:
\begin{equation}\label{eq:error-bound}
    \| \grad g(x) \| \geq \zeta \, \mathrm{dist}^{\theta}(c(x), \mathcal{C}).
\end{equation}
\end{assumption}

Assumption~\ref{assum:error-bound} generalizes the commonly used error bound condition with $\theta = 1$, which has been extensively studied in the context of nonconvex constrained optimization (e.g., \cite{sahin2019inexact,li2021rate,li2024stochastic,lu2024variance,alacaoglu2024complexity}).  To interpret \eqref{eq:error-bound}, we note that the Euclidean gradient of $g$ is given by
$
\nabla g(x) = \nabla c(x)^\top \left( c(x) - \mathcal{P}_{\mathcal{C}}(c(x)) \right)$, 
and its Riemannian counterpart is
\[
\begin{aligned}
\grad g(x) &= \mathcal{P}_{T_x \Mcal} \left( \nabla c(x)^\top \left( c(x) - \mathcal{P}_{\mathcal{C}}(c(x)) \right) \right) \\
&= \nabla c(x)^\top \left( c(x) - \mathcal{P}_{\mathcal{C}}(c(x)) \right) - \mathcal{P}_{N_x \Mcal} \left( \nabla c(x)^\top \left( c(x) - \mathcal{P}_{\mathcal{C}}(c(x)) \right) \right).
\end{aligned}
\]
Hence, Assumption~\ref{assum:error-bound} implies the following geometric inequality:
\[
\mathrm{dist} \left( \nabla c(x)^\top \left( c(x) - \mathcal{P}_{\mathcal{C}}(c(x)) \right), -N_x \Mcal \right) \geq \zeta \, \mathrm{dist}^{\theta}(c(x), \mathcal{C}).
\]
When $\mathcal{C} = \{ y \in \mathbb{R}^m : y = 0 \}$ and $\Mcal = \mathcal{X}$ is a convex set in Euclidean space, this inequality recovers the classical error bound condition in \cite{lu2024variance}. 

Now we give main result of Algorithm \ref{alg:stomanial} under Assumptions \ref{assum:h-2} and \ref{assum:error-bound}.

\begin{theorem}\label{Theorem of R2SRM-1}
  Suppose that Assumptions \ref{general assumption}, \ref{assum:h-2} and \ref{assum:error-bound} hold. Let $\{x_k\}$ be the sequence generated by Algorithm \ref{alg:stomanial-1}. If $\mu_k = k^{-\omega}$, $\tau_k=c_{\tau}(k+1)^{-\omega}$, $a_k=c_{a} k^{-2\omega}$, where $\omega = \min\{ \frac{\theta}{\theta+2}, \frac{1}{2} \}$, $c_{\tau}$ and $c_a$ satisfy 
    \begin{equation}
        c_{\tau} \leq \min\{\frac{1}{L_g}, \frac{1}{\mathcal{G}} \},~~ c_a = c_{\tau}^2 ( \frac{1}{2} +  \frac{1}{32c_{\tau}^2 \tilde{L}^2} + 4C\tilde{L}^2),
    \end{equation}
    where $L_g$ is defined in \eqref{def:L} and $\mathcal{G}$ is defined in Lemma \ref{Euclidean vs manifold1}. Denote $\tilde{K} := 
 \lceil \frac{8\omega}{c_{\tau} \zeta^2}  \rceil$. 
 Then we have that for given $K>2\tilde{K}$,
\begin{equation}\label{eq:temp20-1}
\begin{aligned}
        & \sum_{k=\tilde{K}}^K \tau_k \mathbb{E}\| \grad F_{\mu_k}(x_k)\|^{2}  \\
     \leq &  4(\Phi_{\tilde{K}} - \Phi_{*}) + (4C_g\omega + \frac{4\sigma^2 c_a^2}{8\tilde{L}^2 c_{\tau}}) \max\{  K^{\omega -\frac{2\omega}{\theta}},  1\}   \ln(K),
\end{aligned}
\end{equation}
 where  $C_g$ is given in Lemma \ref{eq:cond-mu}, $\Phi_k = \mathbb{E}[F_{\mu_k}(x_k)] +\frac{\|\delta_k - \grad f(x_k)\|}{16\tau_0 \tilde{L}^2}$ and $\Phi_* = F_*$.   
\end{theorem}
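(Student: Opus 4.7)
My plan is to track a Lyapunov function $\Phi_k$ that combines the smoothed objective value with the momentum estimator error, derive a one-step drift inequality, and then telescope from $\tilde{K}$ to $K$. Three ingredients drive the drift: (i) a retr-smooth descent for $F_{\mu_k}$; (ii) a Moreau-envelope calibration between consecutive smoothing parameters; and (iii) a recursive bound for the truncated momentum estimator $\delta_k$.

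I would first apply Proposition \ref{prop:retr} to $F_{\mu_k}$, whose retr-smooth constant $L_g$ scales like $\mu_k^{-1}$, to obtain
\begin{equation*}
F_{\mu_k}(x_{k+1}) \le F_{\mu_k}(x_k) - \tau_k\langle \grad F_{\mu_k}(x_k), G_k\rangle + \tfrac{L_g\tau_k^2}{2}\|G_k\|^2.
\end{equation*}
Decomposing $G_k = \grad F_{\mu_k}(x_k) + (\delta_k - \grad f(x_k))$ and applying Young's inequality produces the main descent term $-\tfrac{\tau_k}{2}\|\grad F_{\mu_k}(x_k)\|^2$ and a residual in $\|\delta_k-\grad f(x_k)\|^2$. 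Next, I invoke \eqref{eq:moreau-bound-2} for the calibration
\begin{equation*}
F_{\mu_{k+1}}(x_{k+1}) - F_{\mu_k}(x_{k+1}) \le \tfrac{1}{2}(\mu_{k+1}^{-1}-\mu_k^{-1})\,\mathrm{dist}^2(c(x_{k+1}),\mathcal{C}),
\end{equation*}
where the prefactor behaves like $k^{\omega-1}$. Here Assumption \ref{assum:error-bound} is decisive: combined with the identity $\grad g(x) = \mu(\grad F_\mu(x) - \grad f(x))$, it yields $\mathrm{dist}^2(c(x_{k+1}),\mathcal{C}) \le \zeta^{-2/\theta}\mu_{k+1}^{2/\theta}(\|\grad F_{\mu_{k+1}}(x_{k+1})\|+L_f)^{2/\theta}$, so a Young splitting absorbs a small multiple of $\tau_{k+1}\|\grad F_{\mu_{k+1}}(x_{k+1})\|^2$ back into the descent side, leaving a residual of order $k^{\omega-1-2\omega/\theta}$ whose partial sum produces the $\max\{K^{\omega-2\omega/\theta},1\}\ln K$ factor. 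Finally, using Assumption \ref{assumption-E} together with nonexpansiveness of the projection onto $\mathcal{B}_{L_f}$ (justified because $\grad f(x_{k+1})\in\mathcal{B}_{L_f}$), the truncated momentum update gives
\begin{equation*}
\mathbb{E}\|\delta_{k+1}-\grad f(x_{k+1})\|^2 \le (1-a_{k+1})\mathbb{E}\|\delta_k-\grad f(x_k)\|^2 + C\tilde{L}^2\tau_k^2\|G_k\|^2 + a_{k+1}^2\sigma^2.
\end{equation*}

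The weight $\frac{1}{16\tau_0\tilde{L}^2}$ in $\Phi_k$ is calibrated so that adding a suitably scaled copy of the momentum recursion to the descent inequality absorbs both the residual $\|\delta_k-\grad f(x_k)\|^2$ and the $L_g\tau_k^2\|G_k\|^2$ term; the explicit choice $c_a = c_\tau^2\bigl(\tfrac{1}{2}+\tfrac{1}{32c_\tau^2\tilde L^2}+4C\tilde L^2\bigr)$ is engineered to make these cancellations exact. What remains is a drift of the form $\Phi_{k+1}-\Phi_k \le -\tfrac{\tau_k}{4}\mathbb{E}\|\grad F_{\mu_k}(x_k)\|^2 + O(a_{k+1}^2\sigma^2) + O(k^{\omega-1-2\omega/\theta})$, which telescopes cleanly. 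The threshold $\tilde{K} = \lceil 8\omega/(c_\tau\zeta^2)\rceil$ is precisely the smallest index past which the Young-absorption coefficient is dominated by $\tau_k$, so initial indices cannot participate in the sum.

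The main obstacle will be the Moreau-envelope correction in step (ii): making the $k^{\omega-1}$ prefactor from $\mu_{k+1}^{-1}-\mu_k^{-1}$ compatible with the $k^{-2\omega/\theta}$ power provided by the error bound requires the exact choice $\omega = \min\{\theta/(\theta+2),\,1/2\}$. The $1/2$ cap appears because once $\theta\ge 2$, further coarsening of the smoothing schedule would violate the variance-reduction budget $a_{k+1} = c_ak^{-2\omega}$. Carefully bookkeeping the constants in the Young inequalities, together with verifying that the geometric factors $\prod(1-a_j)$ decay fast enough to keep all residual sums bounded by the stated $\ln K$ factor, is the most delicate part of the argument and is responsible for the unavoidable logarithmic term in the bound.
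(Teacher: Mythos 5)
Your overall architecture (retraction descent for $F_{\mu_k}$, Moreau recalibration between $\mu_k$ and $\mu_{k+1}$, a momentum-error recursion, and a Lyapunov telescoping) is the same as the paper's, but there is a genuine gap in the variance bookkeeping. You keep a \emph{constant} weight $\frac{1}{16\tau_0\tilde{L}^2}$ on the estimator error in $\Phi_k$ (taking the statement's expression at face value; in the paper's proof the weight is $\frac{1}{16\tilde{L}^2\tau_{k-1}}$, and the fixed $\tau_0$ in the statement is evidently a typo). With $a_{k+1}=c_a(k+1)^{-2\omega}$ and $\tau_k=c_\tau(k+1)^{-\omega}$, the only negative drift a constant weight $w$ extracts from your recursion is $-w\,a_{k+1}\mathbb{E}\|\delta_k-\grad f(x_k)\|^2=O(k^{-2\omega})\,\mathbb{E}\|\delta_k-\grad f(x_k)\|^2$, whereas the descent step leaves the residual $+\frac{\tau_k}{2}\mathbb{E}\|\delta_k-\grad f(x_k)\|^2$ of order $k^{-\omega}$; since $a_k$ decays strictly faster than $\tau_k$, the coefficient of the error term in $\Phi_{k+1}-\Phi_k$ is eventually positive and the claimed drift $\Phi_{k+1}-\Phi_k\le-\frac{\tau_k}{4}\mathbb{E}\|\grad F_{\mu_k}(x_k)\|^2+\cdots$ fails; nor can you discard the error term by unrolling, because the forcing $\tilde{L}^2\tau_{k-1}^2\|G_{k-1}\|^2/a_k$ is $\Theta(\|G_{k-1}\|^2)$. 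The weight must grow like $1/\tau_{k-1}\asymp k^{\omega}$: then $w_{k+1}a_{k+1}\asymp\tau_k$ matches the residual, the $\|G_k\|^2$ contribution is $O(\tau_k)\|G_k\|^2$ and is split against the descent, and the increase $\frac{1}{\tau_k}-\frac{1}{\tau_{k-1}}\le\frac{\omega}{c_\tau}k^{\omega-1}$ is precisely what the prescribed $c_a=c_\tau^2(\frac12+\frac{1}{32c_\tau^2\tilde{L}^2}+4C\tilde{L}^2)$ is built to dominate. Without this correction your plan does not close.

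Separately, your treatment of the recalibration term is genuinely different from the paper's. You bound $\mathrm{dist}^2(c(x_{k+1}),\mathcal{C})$ at the current iterate through the error bound and the identity $\grad g(x)=\mu(\grad F_\mu(x)-\grad f(x))$, then Young-absorb a fraction of $\tau_{k+1}\|\grad F_{\mu_{k+1}}(x_{k+1})\|^2$ into the next step's descent. The paper instead proves a standalone induction lemma, using the one-step decrease of $g$ along the iterates (this is where the truncation $\|\delta_k\|\le L_f$ is actually exploited) together with the error bound, to get $\mathrm{dist}^2(c(x_k),\mathcal{C})\le 2C_g k^{-2\omega/\theta}$ for all $k\ge\tilde{K}$, and simply substitutes this into the $\frac{\omega}{2}k^{\omega-1}\mathrm{dist}^2(c(x_{k+1}),\mathcal{C})$ term. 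Your route can be made to deliver the right order in $K$ (and your reading of $\tilde{K}$ as the index where the absorption coefficient drops below a fraction of $\tau_k$ is consistent at least for $\theta=1$), but it needs extra care with the index shift (the unmatched $k=K$ term must be bounded separately, e.g.\ by compactness of $\Mcal$), it cannot reproduce the specific constant $C_g$ appearing in the stated bound, and it forgoes the explicit feasibility decay $\mathrm{dist}^2(c(x_k),\mathcal{C})\lesssim k^{-2\omega/\theta}$ that the corollary following the theorem relies on. Proving the induction lemma on $g(x_k)$, rather than absorbing the distance term, is the cleaner path to the statement as written.
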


The following result is an immediate consequence of Theorem \ref{Theorem of R2SRM-1}.

\begin{corollary}
Under the same setting as in Theorem \ref{Theorem of R2SRM-1},   let $i_K$ be the random variable satisfy that $Prob(i_K = k) = \frac{\tau_k}{\sum_{i=\lceil K/2 \rceil}^K \tau_i}$ from $\{\lceil K/2 \rceil,\cdots,K\}$, where $K > 2\tilde{K}$. Let $y_k = \mathcal{P}_{\mathcal{C}}(c(x_{k}))$ and $z_{k} = \frac{1}{\mu_{k}} (c(x_{k})  - \mathcal{P}_{\mathcal{C}}(c(x_{k})))$.  Then we have for any $\epsilon>0$, there exists some $K = \tilde{\mathcal{O}}(\epsilon^{-\max\{\theta+2,2\theta\}})$ such that
    \begin{equation}
        \mathbb{E}\left[\|\mathcal{P}_{T_{x_{i_K}}\Mcal}(\nabla f(x_{i_K}) +  \nabla c(x_{i_K})^\top z_{i_K})\|\right]\leq \epsilon,~~ \mathrm{dist}(c(x_{i_K}),\mathcal{C}) \leq \epsilon.
    \end{equation} 
    \end{corollary}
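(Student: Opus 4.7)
The plan is to combine Theorem~\ref{Theorem of R2SRM-1} with the $\tau_k$-weighted sampling rule to extract both the $\epsilon$-stationarity and $\epsilon$-feasibility estimates, then carefully trace the two regimes of $\omega=\min\{\theta/(\theta+2),\,1/2\}$. Since $K>2\tilde{K}$ we have $\tilde{K}\le\lceil K/2\rceil$, so the bound \eqref{eq:temp20-1} also controls the tail $\sum_{k=\lceil K/2\rceil}^{K}\tau_k\,\mathbb{E}\|\grad F_{\mu_k}(x_k)\|^{2}$. By the definition of $i_K$,
\[
\mathbb{E}\|\grad F_{\mu_{i_K}}(x_{i_K})\|^{2}
\;=\;\frac{\sum_{k=\lceil K/2\rceil}^{K}\tau_k\,\mathbb{E}\|\grad F_{\mu_k}(x_k)\|^{2}}{\sum_{k=\lceil K/2\rceil}^{K}\tau_k},
\]
with $\tau_k=c_\tau(k+1)^{-\omega}$ giving a denominator of order $K^{1-\omega}$.

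A case split on the sign of the numerator exponent $\omega-2\omega/\theta$, which equals $(\theta-2)/(\theta+2)$ when $\omega=\theta/(\theta+2)$ (i.e.\ $\theta\le 2$) and $(\theta-2)/(2\theta)$ when $\omega=1/2$ (i.e.\ $\theta\ge 2$), shows the numerator is $\tilde{\mathcal{O}}(K^{\max\{\omega-2\omega/\theta,\,0\}})$, so the quotient is $\tilde{\mathcal{O}}(K^{-2/\max\{\theta+2,\,2\theta\}})$. Requiring this second-moment bound to be at most $\epsilon^{2}$ yields $K=\tilde{\mathcal{O}}(\epsilon^{-\max\{\theta+2,\,2\theta\}})$, and Jensen's inequality then upgrades it to $\mathbb{E}\|\grad F_{\mu_{i_K}}(x_{i_K})\|\le\epsilon$. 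Since $\grad F_{\mu_k}(x_k)=\mathcal{P}_{T_{x_k}\Mcal}(\nabla f(x_k)+\nabla c(x_k)^{\top}z_k)$ and the standard projection identity gives $z_k\in N_{\mathcal{C}}(y_k)$, the first inequality of \eqref{eq:def:epsilon-1} follows immediately.

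For feasibility I would exploit the factorization $F_{\mu}(x)=f(x)+\mu^{-1}g(x)$ with $g$ from \eqref{def:g}, which yields $\grad g(x)=\mu\bigl(\grad F_{\mu}(x)-\grad f(x)\bigr)$ and hence $\|\grad g(x)\|\le\mu(\|\grad F_{\mu}(x)\|+L_f)$ by Assumption~\ref{assumption-B}. Combining with the error bound \eqref{eq:error-bound},
\[
\zeta\,\mathrm{dist}^{\theta}(c(x_{i_K}),\mathcal{C})
\;\le\;\mu_{i_K}\bigl(\|\grad F_{\mu_{i_K}}(x_{i_K})\|+L_f\bigr).
\]
Because $i_K\ge\lceil K/2\rceil$ one has the deterministic monotone bound $\mu_{i_K}\le\mu_{\lceil K/2\rceil}=\mathcal{O}(K^{-\omega})$. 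Taking expectation, then applying Jensen's inequality to the left-hand side (valid since $\theta\ge 1$), yields $\mathbb{E}\,\mathrm{dist}(c(x_{i_K}),\mathcal{C})=\mathcal{O}(K^{-\omega/\theta})$. Unpacking $\omega/\theta=1/(\theta+2)$ when $\theta\le 2$ and $\omega/\theta=1/(2\theta)$ when $\theta\ge 2$, the previously chosen $K$ makes this bound $\mathcal{O}(\epsilon)$ in both regimes.

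The main technical obstacle is the bookkeeping around the piecewise $\omega$: I must verify that a single unified choice of $K$ simultaneously satisfies both the stationarity and feasibility inequalities across both regimes, and that the randomness of $i_K$ inside the factor $\mu_{i_K}$ is handled cleanly through the monotone bound $\mu_{i_K}\le\mu_{\lceil K/2\rceil}$ rather than through a correlated expectation. The remaining work is routine: Jensen's inequality, the arithmetic of the exponents, and the projection-cone identity $z_k\in N_{\mathcal{C}}(y_k)$.
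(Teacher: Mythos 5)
Your treatment of the stationarity half is essentially the paper's own argument: the same $\tau_k$-weighted sampling identity, the same lower bound $\sum_{k=\lceil K/2\rceil}^K \tau_k \gtrsim c_\tau K^{1-\omega}$, the same exponent bookkeeping (numerator $\tilde{\mathcal{O}}(K^{\max\{\omega-2\omega/\theta,0\}})$ from \eqref{eq:temp20-1}, quotient $\tilde{\mathcal{O}}(K^{-2/\max\{\theta+2,2\theta\}})$), followed by Jensen's inequality and the identity $\grad F_{\mu_k}(x_k)=\mathcal{P}_{T_{x_k}\Mcal}(\nabla f(x_k)+\nabla c(x_k)^\top z_k)$ with $z_k\in N_{\mathcal{C}}(y_k)$. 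That part is correct and matches the paper.

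Where you diverge is the feasibility estimate, and there you prove something weaker than the statement. Your chain $\zeta\,\mathrm{dist}^\theta(c(x_{i_K}),\mathcal{C})\le\|\grad g(x_{i_K})\|\le \mu_{i_K}\bigl(\|\grad F_{\mu_{i_K}}(x_{i_K})\|+L_f\bigr)$ is valid, but since $\|\grad F_{\mu_{i_K}}(x_{i_K})\|$ is controlled only in expectation, taking expectations and applying Jensen yields $\mathbb{E}[\mathrm{dist}(c(x_{i_K}),\mathcal{C})]\le\epsilon$ (at the right rate $K^{-\omega/\theta}$), not the bound $\mathrm{dist}(c(x_{i_K}),\mathcal{C})\le\epsilon$ as written in the corollary and in Definition \ref{def:epsilon-1}, where the distance bound carries no expectation, i.e.\ it must hold for every realization of $i_K$. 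The paper obtains this stronger, pathwise bound from \eqref{eq:bound:g}: because the momentum estimator is truncated onto $\mathcal{B}_{L_f}$ in \eqref{def:delta:indicator}, the descent inequality \eqref{eq:diff-gk-k-1} for $g(x_k)=\tfrac12\mathrm{dist}^2(c(x_k),\mathcal{C})$ is deterministic, and induction gives $\mathrm{dist}^2(c(x_k),\mathcal{C})\le 2C_g k^{-2\omega/\theta}$ surely for all $k\ge\tilde K$; since $i_K\ge\lceil K/2\rceil>\tilde K$, the feasibility requirement follows for every realization. So your route is internally consistent but bypasses exactly the mechanism (the truncation-enabled deterministic feasibility lemma) that the algorithm was designed around; to close the gap, simply invoke \eqref{eq:bound:g}, which is already established en route to Theorem \ref{Theorem of R2SRM-1}, in place of the error-bound-plus-gradient-decomposition argument. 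Your careful handling of the random $\mu_{i_K}$ via the monotone bound $\mu_{i_K}\le\mu_{\lceil K/2\rceil}$ is fine but becomes unnecessary once \eqref{eq:bound:g} is used.
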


    \begin{proof}
        By the definition of $y_k$ and $z_k$, it holds that $z_k\in N_{\mathcal{C}}(y_k)$ and  
\begin{equation}\label{eq:gradF-indicator}
\begin{aligned}
\grad F_{\mu_k}(x_k)&  = \mathcal{P}_{T_{x_k}\Mcal} \left(  \nabla f(x_k) + \frac{1}{\mu_k} \nabla c(x_k)^\top ( c(x_k) - \mathcal{P}_{\mathcal{C}}(c(x_k))  ) \right) \\
& = \mathcal{P}_{T_{x_k}\Mcal} \left( \nabla  f(x_k) +  \nabla c(x_k)^\top z_k \right).
\end{aligned}
\end{equation}
Let us define the constant $C_1: = (4C_g\omega +\frac{4\sigma^2 c_a^2}{8\tilde{L}^2 c_{\tau}})$.  Plugging \eqref{eq:gradF-indicator} into \eqref{eq:temp20-1} and combining with the definition of $x_{i_K}$, we obtain
\begin{equation}\label{eq:gradF-output-bound-1}
 \begin{aligned}
     &\mathbb{E}\left[\left\|  \mathcal{P}_{T_{x_{i_K}}\mathcal{M}}(\nabla f(x_{i_K})+  \nabla c(x_{i_K})^\top z_{i_K} )\right\|^2 \right]  
     =  \mathbb{E}[ \|\grad F_{\mu_{i_K}}(x_{i_K})\|^2 ]\\
     = & \frac{\sum_{i=\lceil K/2 \rceil}^K \tau_i \mathbb{E}[ \|\grad F_{\mu_{i}}(x_{i})\|^2 ]  }{ \sum_{i=\lceil K/2 \rceil}^K \tau_i }   \\
     \leq & \frac{4(\Phi_{\tilde{K}} - \Phi_{*}) + C_1 \max\{  K^{\omega -\frac{2\omega}{\theta}},  1\}   \ln(K) }{\sum_{i=\lceil K/2 \rceil}^K \tau_i } \\
     \leq &  \frac{4C_1}{3c_{\tau}} \max\left\{\frac{1}{K^{1-2\omega+\frac{2\omega}{\theta}}},   \frac{1}{K^{1-\omega}}\right\} \ln(K) + \frac{16(\Phi_{\tilde{K}} - \Phi_{*}) }{3c_\tau K^{1-\omega}},
 \end{aligned} 
\end{equation}
where the last inequality uses the fact that $\tau_k = c_{\tau}k^{-\omega}$ and 
        \begin{equation*}
        \begin{aligned}
            \sum_{k=\lceil K/2 \rceil}^K k^{-\omega} & \geq \sum_{k=\lceil K/2 \rceil}^K \int_k^{k+1} x^{-\omega} \mathrm{~d} x \geq \frac{1}{1-\omega} ((K+1)^{1-\omega} - (\lceil K/2 \rceil)^{1-\omega})\\
            &\geq \frac{1}{1-\omega} (1-\frac{1}{2^{1-\omega}}) K^{1-\omega} \geq \frac{3}{4}  K^{1-\omega},
            \end{aligned}
        \end{equation*}
        where the last inequality uses $\frac{1}{3}\leq \omega \leq \frac{1}{2}$. 
         On the other hand, it follows from \eqref{eq:bound:g} that
   \begin{equation}
       \text{dist}^2(c(x_{i_K}), \mathcal{C}) \leq 2C_g  (i_K)^{-\frac{2\omega}{\theta} } \leq 2^{1-2\omega/\theta}C_g    K^{-\frac{2\omega}{\theta} }.
   \end{equation}
       Since $\omega = \min\{ \frac{\theta}{\theta +2}, \frac{1}{2} \}$ and $\theta \geq 1$, it is straightforward to verify that all the exponents $1-2\omega + \frac{2\omega}{\theta}, \frac{2\omega}{\theta},1-\omega$ are strictly positive. Furthermore, we have
       $$
       \min\{ 1-2\omega + \frac{2\omega}{\theta}, \frac{2\omega}{\theta},1-\omega \} = \min\{ \frac{2}{\theta+2}, \frac{1}{\theta} \}.
       $$
       Applying Jensen’s inequality finally completes the proof.
    \end{proof}

\section{Proof of main result}

\subsection{Common lemmas}

We begin with two technical lemmas that are useful for the subsequent analysis.

\begin{lemma}[\cite{levy2021storm+}, Lemma 3]\label{lem:bound-bk}
    Let $b_1>0,b_2,\cdots,b_n\geq 0$ be a sequence of real numbers, $p\in(0,1)$ be a real number. The following inequality holds
    \begin{equation}\label{eq:bound-bk}
        \sum_{k=1}^n \frac{b_k}{(\sum_{i=1}^k b_i)^p} \leq \frac{1}{1-p} \left(  \sum_{k=1}^n b_k  \right)^{1-p}.
    \end{equation}
\end{lemma}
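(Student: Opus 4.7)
The plan is to reduce the discrete sum on the left of \eqref{eq:bound-bk} to a simple improper Riemann integral via a monotonicity comparison. I would introduce the partial sums $S_k := \sum_{i=1}^k b_i$ with $S_0 := 0$, so that $b_k = S_k - S_{k-1}$ and the quantity to bound becomes $\sum_{k=1}^n (S_k - S_{k-1})/S_k^p$. Since $b_1 > 0$ and the remaining $b_i$ are nonnegative, $S_k > 0$ for every $k \geq 1$, and each summand is well-defined.

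The core step exploits that the map $x \mapsto x^{-p}$ is strictly decreasing on $(0,\infty)$ for $p \in (0,1)$. On each interval $[S_{k-1}, S_k]$ the integrand therefore dominates its right endpoint value $S_k^{-p}$, yielding the one-term estimate
\[
\frac{b_k}{S_k^p} \;=\; \frac{S_k - S_{k-1}}{S_k^p} \;\leq\; \int_{S_{k-1}}^{S_k} x^{-p}\, dx.
\]
Summing over $k = 1, \ldots, n$ telescopes the endpoints into a single integral from $0$ to $S_n$, and evaluating $\int_0^{S_n} x^{-p}\, dx = S_n^{1-p}/(1-p)$ produces exactly the right-hand side of \eqref{eq:bound-bk}.

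The only delicate point, and the natural place to expect trouble, is the first term $k=1$: the left endpoint $S_0 = 0$ coincides with the singularity of $x^{-p}$. However, since $p \in (0,1)$ the singularity is integrable, so $\int_0^{S_1} x^{-p}\, dx$ converges to the finite value $S_1^{1-p}/(1-p)$, and moreover the one-term bound $b_1/S_1^p = S_1^{1-p} \leq S_1^{1-p}/(1-p)$ holds trivially because $1/(1-p) > 1$. Hence the integral comparison extends uniformly to $k=1$, and no further refinement is required. A reader who prefers to avoid the improper integral entirely can instead proceed by induction on $n$, using the one-step bound $(S_n^{1-p} - S_{n-1}^{1-p}) \geq (1-p) b_n / S_n^p$, which follows from the concavity of $x \mapsto x^{1-p}$; either route is short.
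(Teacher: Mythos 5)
Your proof is correct. Note that the paper itself does not prove this lemma at all; it is quoted verbatim from Levy et al.\ (\cite{levy2021storm+}, Lemma~3), so there is no in-paper argument to compare against. Your integral-comparison argument is the standard one and is complete: the partial sums $S_k$ are positive because $b_1>0$, the monotonicity of $x\mapsto x^{-p}$ gives the one-term bound $b_k/S_k^p\le\int_{S_{k-1}}^{S_k}x^{-p}\,dx$ (trivially true when $b_k=0$), the sum telescopes into $\int_0^{S_n}x^{-p}\,dx=S_n^{1-p}/(1-p)$, and you correctly observe that the only delicate point, the integrable singularity at $S_0=0$ in the $k=1$ term, causes no trouble since $p\in(0,1)$. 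The alternative you sketch, telescoping the one-step inequality $S_n^{1-p}-S_{n-1}^{1-p}\ge(1-p)\,b_n S_n^{-p}$ obtained from concavity of $x\mapsto x^{1-p}$, is equally valid and is essentially the argument given in the cited reference; either version would serve as a self-contained proof if the authors wished to include one.
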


\begin{lemma}\label{lem:bound-x}
 Let $c,d>0$ be two real numbers. Suppose that there exist $\alpha, \beta \in  (0,1)$ such that $x\leq c x^{\alpha} + d x^{\beta} + e$. The following inequality holds
 \begin{equation}\label{eq:bound-x}
     x\leq 2(4\alpha)^{\frac{\alpha}{1-\alpha}}c^{\frac{1}{1-\alpha}} + 2(4\beta)^{\frac{\beta}{1-\beta}}d^{\frac{1}{1-\beta}}+ 2e.
 \end{equation}
\end{lemma}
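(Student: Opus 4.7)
The plan is to apply Young's inequality twice to absorb the sublinear terms $cx^\alpha$ and $dx^\beta$ into a controlled fraction of $x$, leaving only explicit constants on the right-hand side. (Implicitly one assumes $x\geq 0$, otherwise $x^\alpha$ is not well defined for noninteger $\alpha$; when $x\leq 0$ the claimed bound is trivial.) First I would write $cx^\alpha=(c/\mu)\cdot(\mu x^\alpha)$ for a parameter $\mu>0$ to be chosen, and apply Young's inequality with conjugate exponents $p=1/\alpha$ and $q=1/(1-\alpha)$, both strictly greater than $1$ since $\alpha\in(0,1)$. This yields
$$cx^\alpha \;\leq\; \alpha\,\mu^{1/\alpha}\,x \;+\; (1-\alpha)\,\mu^{-1/(1-\alpha)}\,c^{1/(1-\alpha)}.$$
The key tuning step is to pick $\mu=(4\alpha)^{-\alpha}$ so that the coefficient of $x$ becomes exactly $1/4$; the constant term then simplifies to $(1-\alpha)(4\alpha)^{\alpha/(1-\alpha)}c^{1/(1-\alpha)}$.

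Next I would repeat the same argument verbatim with $(c,\alpha)$ replaced by $(d,\beta)$, producing
$$dx^\beta \;\leq\; \tfrac{1}{4}x \;+\; (1-\beta)(4\beta)^{\beta/(1-\beta)}d^{1/(1-\beta)}.$$
Substituting both estimates into the hypothesis $x\leq cx^\alpha+dx^\beta+e$, the two $\tfrac{1}{4}x$ contributions combine to $\tfrac{1}{2}x$, which I then move to the left-hand side. Multiplying through by $2$ and using the crude bounds $1-\alpha<1$ and $1-\beta<1$ to discard those two factors produces precisely the stated inequality \eqref{eq:bound-x}.

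I do not foresee any serious obstacle; the proof is essentially a one-line application of Young's inequality twice. The only subtle point is the tuning of the Young parameter $\mu$: it must be chosen so that the $x$-coefficients contributed by the two applications sum to something strictly smaller than $1$ (here $1/2$), which is what permits the absorption step. The factor of $4$ in the constant $(4\alpha)^{\alpha/(1-\alpha)}$ is exactly the price paid for pinning each individual $x$-coefficient at $1/4$ rather than some larger value.
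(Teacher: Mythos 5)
Your proof is correct and follows essentially the same route as the paper: both arguments apply Young's inequality to each of $cx^{\alpha}$ and $dx^{\beta}$ with the scaling tuned so that each contributes exactly $\tfrac{1}{4}x$, then absorb the resulting $\tfrac12 x$ and multiply by $2$. Your parameterization via $\mu=(4\alpha)^{-\alpha}$ is just a notational variant of the paper's rescaling by $4\alpha$, and even yields the same constants (slightly sharper before discarding the factors $1-\alpha,1-\beta$).
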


\begin{proof}
    Applying Young's inequality to $cx^{\alpha}$ yields
    \begin{equation}\label{eq:bound-c}
        cx^{\alpha} =  \frac{1}{4\alpha} (4\alpha c *  x^{\alpha}) \leq \frac{1}{4\alpha} ( (4\alpha c)^{\frac{1}{1-\alpha}} + \alpha x ) = \frac{1}{4} x + (4\alpha)^{\frac{\alpha}{1-\alpha}}c^{\frac{1}{1-\alpha}}.
    \end{equation}
    Similarly, one has that $dx^{\beta} \leq  \frac{1}{4} x + (4\beta)^{\frac{\beta}{1-\beta}}d^{\frac{1}{1-\beta}}$. 
    Combining with \eqref{eq:bound-c} yields
    \begin{equation}
        x\leq \frac{1}{2} x + (4\alpha)^{\frac{\alpha}{1-\alpha}}c^{\frac{1}{1-\alpha}} + (4\beta)^{\frac{\beta}{1-\beta}}d^{\frac{1}{1-\beta}}+ e.
    \end{equation}
    Re-ranging the inequality completes the proof.
\end{proof}

To apply the Riemannian stochastic gradient method to the smoothed problem \eqref{smoothing problem}, it is necessary to verify that $F_\mu$ satisfies the retr-smoothness condition defined in Definition~\ref{def:retr-smooth}. The following lemma establishes this property under two cases of $h$.

\begin{lemma}\label{Euclidean vs manifold1} 
  Suppose that Assumption \ref{general assumption} holds. If $h$ is Lipschitz continuous with constant $\ell_h$ or is the indicator function of a convex set $\mathcal{C}$, then for any $\mu \in (0,1)$, and $x\in\mathcal{M}$, and $F_{\mu}$ is retr-smooth in the sense that
  \begin{equation}\label{equ:L2111}
    F_{\mu}(\mathcal{R}_{x}(\eta)) \leq F_{\mu}(x) + \left<\eta,\grad F_{\mu}(x)\right> + \frac{\mathcal{G}}{2\mu}\|\eta\|^2
  \end{equation}
  for all $\eta\in T_{x}\mathcal{M}$, where $\mathcal{G} = \max\{\mathcal{G}_1,\mathcal{G}_2\}$ and $\mathcal{G}_1, \mathcal{G}_2$ are given in the proof. 
\end{lemma}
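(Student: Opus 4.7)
\medskip

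\noindent\textbf{Proof proposal.} The plan is to reduce the claim to Proposition \ref{prop:retr} applied to the composite term $\Psi_\mu(x) := h_\mu(c(x))$, and then add the retr-smoothness of $f$ (which is given directly by Assumption \ref{assumption-B} with constant $L$). Since retr-smoothness with a common retraction $\mathcal{R}$ is additive, it suffices to show that $\Psi_\mu$ is retr-smooth with constant of order $1/\mu$, and take $\mathcal{G}$ large enough to absorb $L$ once we use $\mu\in(0,1)$ to write $L\leq L/\mu$.

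To invoke Proposition \ref{prop:retr}, I need two ingredients for $\Psi_\mu$: a Euclidean-smoothness constant and a uniform bound on $\|\nabla\Psi_\mu(x)\|$ over $\mathrm{conv}(\Mcal)$. The Euclidean gradient is $\nabla\Psi_\mu(x)=\nabla c(x)^\top \nabla h_\mu(c(x))$, and a standard chain-rule computation together with Assumption \ref{assumption-B} (namely $\|\nabla c(x)\|\leq L_c$ and $L_{\nabla c}$-smoothness of $c$) and Proposition \ref{h proposiztion} (namely $\nabla h_\mu$ is $1/\mu$-Lipschitz) gives
\begin{equation*}
\|\nabla\Psi_\mu(x)-\nabla\Psi_\mu(y)\|
\;\leq\; \bigl(L_{\nabla c}\,\sup_{z\in\mathrm{conv}(\Mcal)}\|\nabla h_\mu(c(z))\|+L_c^2/\mu\bigr)\,\|x-y\|.
\end{equation*}
So the whole task reduces to bounding $\|\nabla h_\mu(c(x))\|$ uniformly in $x\in\mathrm{conv}(\Mcal)$ in each of the two cases.

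In Case 1 ($h$ is $\ell_h$-Lipschitz), Proposition \ref{h proposiztion} delivers $\|\nabla h_\mu(c(x))\|\leq \ell_h$ for free, so $\Psi_\mu$ has Euclidean smoothness constant $L_{\nabla c}\ell_h+L_c^2/\mu$ and gradient bound $L_c\ell_h$; Proposition \ref{prop:retr} then yields a retr-smoothness constant of the form $\mathcal{G}_1/\mu$ after using $\mu\in(0,1)$. In Case 2 ($h=\delta_{\mathcal{C}}$), we have $\nabla h_\mu(y)=\frac{1}{\mu}(y-\mathcal{P}_\mathcal{C}(y))$, whose norm is $\tfrac{1}{\mu}\mathrm{dist}(y,\mathcal{C})$; to bound this uniformly I will use Assumption \ref{assumption-F} to pick a feasible optimal $x_*\in\Mcal$ with $c(x_*)\in\mathcal{C}$, and combine $L_c$-Lipschitzness of $c$ with the diameter bound $D=\max_{x,y\in\Mcal}\|x-y\|$ from compactness of $\Mcal$ to get $\mathrm{dist}(c(x),\mathcal{C})\leq L_c D$ on $\Mcal$ (extending by continuity to $\mathrm{conv}(\Mcal)$). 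This produces Euclidean smoothness of order $1/\mu$ and gradient bound of order $1/\mu$, so Proposition \ref{prop:retr} again yields a retr-smoothness constant of the form $\mathcal{G}_2/\mu$.

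Combining the retr-smoothness of $\Psi_\mu$ with the retr-smoothness of $f$ (constant $L$, which I absorb into $\mathcal{G}$ via $L\leq L/\mu$) gives the desired inequality \eqref{equ:L2111} with $\mathcal{G}=\max\{\mathcal{G}_1,\mathcal{G}_2\}$ chosen so that the factor of $2$ in the statement is absorbed. The main obstacle is the indicator-function case: because $h$ is not Lipschitz, the clean gradient-bound \eqref{h bound} from Proposition \ref{h proposiztion} is unavailable, and one must instead exploit compactness of $\Mcal$ together with feasibility of the optimum to bound $\mathrm{dist}(c(\cdot),\mathcal{C})$ uniformly; getting an $\mathcal{O}(1/\mu)$ rather than an $\mathcal{O}(1/\mu^2)$ retr-smoothness constant in this case depends on this uniform distance bound being independent of $\mu$.
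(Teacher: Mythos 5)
Your proposal is correct and follows essentially the same route as the paper: decompose $F_\mu = f + h_\mu\circ c$, bound the Euclidean gradient and gradient-Lipschitz constant of the composite term in each of the two cases, and convert to retr-smoothness via Proposition \ref{prop:retr}, absorbing the constant $L$ of $f$ using $\mu<1$. The only (harmless) difference is that the paper bounds $\mathrm{dist}(c(x),\mathcal{C})$ on $\Mcal$ by an abstract constant $M$ from compactness, whereas you make it explicit as $L_cD$ via feasibility of $x_*$; just note that the extension of this bound to $\mathrm{conv}(\Mcal)$ follows from the same Lipschitz estimate (the convex hull has the same diameter), not merely "by continuity".
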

\begin{proof}

Since $F_{\mu}(x) = f(x) + h_{\mu}(c(x))$ and $f$ is retr-smooth with constant $L$ by Assumption~\ref{assumption-B}, it suffices to analyze the retr-smoothness of the composite term $h_{\mu}(c(x))$. To this end, we define $s(x) := h_{\mu}(c(x))$, and examine the following two cases separately.

  \textbf{Case 1: $h$ is Lipschitz continuous with $\ell_h$.} It follows from the formula of $\nabla h_{\mu}$ that $\|\nabla s(x)\| \leq  L_c \ell_h$ for all $x\in\mathcal{M}$.
Additionally, it follows from Proposition \ref{h proposiztion} that
$$
\begin{aligned}
& \|\nabla s(x) - \nabla s(y)\| \\
\leq & \| \nabla c(x)^\top (\nabla h_{\mu_k}(c(x)) - \nabla h_{\mu_k}(c(y)))\| + \| (\nabla c(x) - \nabla c(y))^\top \nabla h_{\mu_k}(c(y)) \| \\
\leq &  (\frac{L_c^2 }{\mu} +  \ell_h L_{\nabla c}) \|x - y\|.
\end{aligned}
$$
Now we use Proposition \ref{prop:retr} to obtain $s$ is retr-smooth with constant $\alpha^2(\frac{L_c^2 }{\mu} +  \ell_h L_{\nabla c})+ 2L_c \ell_h \beta$. Since $\mu\leq 1$, we set $\mathcal{G}_1 = L + \alpha^2(L_c^2 + \ell_h L_{\nabla c}) +2 L_c\ell_h \beta$. Therefore, $F_{\mu}$ is retr-smooth with constant $\mathcal{G}_1$ in this case.

\textbf{Case 2: $h(x) = \delta_{\mathcal{C}}(x)$ with a convex set $\mathcal{C}$.} 
  Since that $\mathrm{dist}(c(x),\mathcal{C}) = \|c(x) - \mathcal{P}_{\mathcal{C}}(c(x))\|$ is a continuous function and $\Mcal$ is compact submanifold, there exists a constant $M$ such that $\|c(x) - \mathcal{P}_{\mathcal{C}}(c(x))\| \leq M$ for any $x\in \Mcal$.  Then we have that  $\|\nabla s(x)\| \leq  \frac{L_c M}{\mu}$. Additionally, it follows from Proposition \ref{h proposiztion} that
  $$
\begin{aligned}
& \|\nabla s(x) - \nabla s(y)\| \\
\leq & \| \nabla c(x)^\top (\nabla h_{\mu_k}(c(x)) - \nabla h_{\mu_k}(c(y)))\| + \| (\nabla c(x) - \nabla c(y))^\top \nabla h_{\mu_k}(c(y)) \| \\
\leq &  (\frac{L_c^2 }{\mu} +\frac{ M L_{\nabla c}}{\mu}) \|x - y\|.
\end{aligned}
$$
  It follows from Lemma 2.7 in \cite{grocf} that $s$ is retr-smooth with constant $\frac{ \alpha^2 (  L_c^2 + M L_{\nabla c})}{\mu}  + 2 \frac{L_c M}{\mu}\beta$. We set $\mathcal{G}_2 = L + \alpha^2(L_c^2 + M L_{\nabla c}) + 2L_c M \beta$. Therefore, $F_{\mu}$ is retr-smooth with constant $\mathcal{G}_2$ in this case.

  This completes the proof by considering both cases.
\end{proof}

In Lemma~\ref{Euclidean vs manifold1}, we assume that the smoothing parameter satisfies $\mu \leq 1$, which in fact holds throughout the subsequent analysis.

\subsection{Proof of Section \ref{sec:lipsch}}\label{sec:proof-1}

Let us denote 
\begin{equation}
    \epsilon_k = \delta_k - \grad f(x_k).
\end{equation}
We first present a Lemma that bounds the estimation error of the recursive momentum estimator.  
\begin{lemma}[Estimation error bound]\label{Estimation error bound}
Suppose Assumptions \ref{general assumption} holds and consider Algorithm \ref{alg:stomanial}. Then the expected estimation error of the estimator is bounded by
\begin{equation}\label{eq:recursive:epsilon}
    \mathbb{E}[ \|\epsilon_k\|^2] \leq   (1-a_k)\mathbb{E}[\|\epsilon_{k-1}\|^2]+2a_k^2\sigma^2+2\tilde{L}^2\tau_{k-1}^2\mathbb{E}[\|G_{k-1}\|^2].
\end{equation}
\end{lemma}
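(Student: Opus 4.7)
The plan is to mirror the standard STORM-style analysis adapted to the Riemannian setting. First, I would rewrite the recursion for $\epsilon_k=\delta_k-\grad f(x_k)$ by substituting the update for $\delta_k$ and adding and subtracting $(1-a_k)\mathcal{T}_{x_{k-1}}^{x_k}\grad f(x_{k-1})$ to produce the decomposition
\begin{equation*}
\epsilon_k \;=\; (1-a_k)\,\mathcal{T}_{x_{k-1}}^{x_k}\epsilon_{k-1} \;+\; A_k,
\end{equation*}
where
\begin{equation*}
A_k \;:=\; \bigl[\grad\tilde f(x_k,\xi_k)-\grad f(x_k)\bigr] \;-\; (1-a_k)\,\mathcal{T}_{x_{k-1}}^{x_k}\bigl[\grad\tilde f(x_{k-1},\xi_k)-\grad f(x_{k-1})\bigr].
\end{equation*}
The key observation is that, conditional on the $\sigma$-algebra $\mathcal{F}_{k-1}$ generated by $\xi_1,\ldots,\xi_{k-1}$ (so that $x_{k-1},x_k,\delta_{k-1}$ are measurable), both $x_k$ and $x_{k-1}$ are deterministic and $\xi_k$ is fresh, hence Assumption~\ref{assumption-D} gives $\mathbb{E}[A_k\mid\mathcal{F}_{k-1}]=0$.

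Next, I would use the isometry of the vector transport together with the fact that $A_k$ has zero conditional mean to split orthogonally:
\begin{equation*}
\mathbb{E}\bigl[\|\epsilon_k\|^2\mid\mathcal{F}_{k-1}\bigr] \;=\; (1-a_k)^2\|\epsilon_{k-1}\|^2 \;+\; \mathbb{E}\bigl[\|A_k\|^2\mid\mathcal{F}_{k-1}\bigr].
\end{equation*}
To handle $\|A_k\|^2$, I would rewrite $A_k = a_k\bigl[\grad\tilde f(x_k,\xi_k)-\grad f(x_k)\bigr]+(1-a_k)\bigl\{[\grad\tilde f(x_k,\xi_k)-\grad f(x_k)] - \mathcal{T}_{x_{k-1}}^{x_k}[\grad\tilde f(x_{k-1},\xi_k)-\grad f(x_{k-1})]\bigr\}$ and apply $\|u+v\|^2\le 2\|u\|^2+2\|v\|^2$. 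The first piece is controlled by $2a_k^2\sigma^2$ via Assumption~\ref{assumption-D}. For the second, I would invoke the variance-bounded-by-second-moment inequality and then Assumption~\ref{assumption-E} applied with $y=x_{k-1}$, $x=x_k=\mathcal{R}_{x_{k-1}}(\zeta_{k-1})$ where $\zeta_{k-1}=-\tau_{k-1}G_{k-1}$, yielding a bound of $2(1-a_k)^2\tilde L^2\|\zeta_{k-1}\|^2 = 2(1-a_k)^2\tilde L^2\tau_{k-1}^2\|G_{k-1}\|^2$.

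Finally, I would combine the two contributions, take total expectation, and use the elementary bounds $(1-a_k)^2\le 1-a_k$ and $(1-a_k)^2\le 1$ (valid since $a_k\in(0,1]$) to collapse to the stated recursion \eqref{eq:recursive:epsilon}. The only delicate step is the second piece of the $A_k$ bound: one must apply Assumption~\ref{assumption-E} in its squared form and rely on the isometry of $\mathcal{T}_{x_{k-1}}^{x_k}$ so that differences evaluated at distinct base points can be compared in norm; this is where the Riemannian structure enters nontrivially. Everything else is routine Young's inequality and conditional-expectation bookkeeping.
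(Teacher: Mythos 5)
Your proposal is correct and follows essentially the same route as the paper: the decomposition $\epsilon_k=(1-a_k)\mathcal{T}_{x_{k-1}}^{x_k}\epsilon_{k-1}+A_k$ is exactly the paper's splitting into $a_k(\grad\tilde f(x_k,\xi_k)-\grad f(x_k))+(1-a_k)Z_k$, and the subsequent steps (zero conditional mean of the martingale part, Young's inequality, variance-bounded-by-second-moment, Assumption~\ref{assumption-E} in squared form with $\zeta_{k-1}=-\tau_{k-1}G_{k-1}$, isometry of the transport, and $(1-a_k)^2\le 1-a_k$, $(1-a_k)^2\le 1$) coincide with the paper's argument.
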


\begin{proof}
Let $\mathcal{F}_k = \{\xi_1,\cdots,\xi_{k-1}\}$ denote the collection of samples drawn up to iteration $k-1$ in Algorithm \ref{alg:stomanial}.  
It follows from the update rule of $\delta_k$ that
$$
\begin{aligned}
     & \delta_k- \grad f(x_k) \\
    = &  (1-a_k)\mcT_{x_{k-1}}^{x_k}(\delta_{k-1}- \grad \tilde{f}(x_{k-1}, \xi_k))+ \grad \tilde{f}(x_k, \xi_k)- \grad f(x_k) \\
     = &  (1-a_k)\mcT_{x_{k-1}}^{x_k}(\delta_{k-1}- \grad f(x_{k-1}))+a_k( \grad \tilde{f}(x_k, \xi_k)- \grad f(x_k)) \\
   & +(1-a_k)\underbrace{( \grad \tilde{f}(x_k, \xi_k)-\mcT_{x_{k-1}}^{x_k} \grad \tilde{f}(x_{k-1}, \xi_k )+\mcT_{x_{k-1}}^{x_k} \grad f(x_{k-1})- \grad f(x_k))}_{Z_k}.
\end{aligned}
$$
It follows from Assumption \ref{assumption-D} that
\begin{equation}\label{eq:zero-expect}
\begin{aligned}
   & \mathbb{E}[ \langle 
 \mcT_{x_{k-1}}^{x_k}(\delta_{k-1}- \grad f(x_{k-1})), ( \grad \tilde{f}(x_k, \xi_k)- \grad f(x_k)) \rangle \vert \mathcal{F}_k  ] = 0,\\
& \mathbb{E}[ \langle 
 \mcT_{x_{k-1}}^{x_k}(\delta_{k-1}- \grad f(x_{k-1})), Z_k \rangle \vert \mathcal{F}_k  ] = 0.
 \end{aligned}
\end{equation}
Then we have that
\begin{equation}
 \begin{aligned}
&\mbE[\|\epsilon_k\|^2 \lvert \mcF_k]\\
=&(1-a_k)^2\|\epsilon_{k-1}\|^2+\mbE[\|a_k (\grad \tilde{f}(x_k, \xi_k)- \grad f(x_k))+(1-a_k)Z_k\|^2\lvert \mcF_k] \\
\leq &(1-a_k)^2\|\epsilon_{k-1}\|^2+2a_k^2\mbE[\| \grad \tilde{f}(x_k, \xi_k)- \grad f(x_k)\|^2\lvert \mcF_k] +2(1-a_k)^2\mbE[\| Z_k\|^2\lvert \mcF_k] \\
\leq &(1-a_k)^2\|\epsilon_{k-1}\|^2+2a_k^2\mbE[\| \grad \tilde{f}(x_k, \xi_k)- \grad f(x_k)\|^2\lvert \mcF_k]\\
&+2(1-a_k)^2\mbE[\| \grad \tilde{f}(x_k, \xi_k)-\mcT_{x_{k-1}}^{x_k} \grad \tilde{f}(x_{k-1}, \xi_k )\|^2\lvert \mcF_k] \\
\leq &(1-a_k)^2\|\epsilon_{k-1}\|^2+2a_k^2\sigma^2+2(1-a_k)^2\tau_{k-1}^2\tilde{L}^2\|G_{k-1}\|^2,
\end{aligned}
\end{equation}
where the first equality uses isometry property of vector transport $\mcT_{x_{k-1}}^{x_k}$ and \eqref{eq:zero-expect}, the first inequality follows from  $\|a+b\|^2\leq2\|a\|^2+2\|b\|^2$. The second inequality applies $\mbE\|x-\mbE[x]\|^2\leq\mbE\|x\|^2$ for random variable $x$. The last inequality follows from Assumptions \ref{assumption-D}. Taking full expectation on both sides of the inequality, we obtain \eqref{eq:recursive:epsilon} and complete the proof.

\end{proof}

The following lemma establish the relationship between the  error estimation and the gradient estimation. 
\begin{lemma}\label{Estimation error bound-1}
Suppose Assumptions \ref{general assumption} holds and consider Algorithm \ref{alg:stomanial}. Then the expected estimation error of the estimator is bounded by
\begin{equation}\label{eq:bound:epsilon-k}
\begin{aligned}
\mathbb{E}\sum_{k=1}^K \|\epsilon_k\|^2 \leq 18\tilde{L}^2 K^{2/9} \left( \mathbb{E}\sum_{k=1}^K \|G_k\|^2\right)^{1/3} + 24 \sigma^2 K^{1/3}. 
\end{aligned}
\end{equation}
\end{lemma}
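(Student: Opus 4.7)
The plan is to unroll the one-step recursion from Lemma~\ref{Estimation error bound}, exchange the order of summation, and then separately estimate the resulting noise and gradient contributions. Writing $b_k := \mathbb{E}\|\epsilon_k\|^2$ and $c_k := 2 a_k^2 \sigma^2 + 2\tilde{L}^2 \tau_{k-1}^2\, \mathbb{E}\|G_{k-1}\|^2$, the recursion $b_k \leq (1-a_k) b_{k-1} + c_k$ unrolls (with $a_1=1$ killing the boundary term) into $b_k \leq \sum_{j=1}^{k}\prod_{i=j+1}^{k}(1-a_i)\,c_j$, and interchanging the sums gives
\[
\sum_{k=1}^{K} b_k \;\leq\; \sum_{j=1}^{K} c_j\, S_j,\qquad S_j \;:=\; \sum_{k=j}^{K}\prod_{i=j+1}^{k}(1-a_i).
\]

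The main technical obstacle is to show that $S_j \leq C j^{2/3}$ for some universal constant $C$. I would use $\log(1-a_i) \leq -a_i$ together with the integral bound $\sum_{i=j+1}^{k} i^{-2/3} \geq 3\bigl[(k+1)^{1/3}-(j+1)^{1/3}\bigr]$ to produce the exponential envelope $\prod_{i=j+1}^{k}(1-a_i) \leq \exp\!\bigl(-3[(k+1)^{1/3}-(j+1)^{1/3}]\bigr)$. Comparing $S_j$ to the integral $\int_{j+1}^{\infty}\exp(-3[x^{1/3}-(j+1)^{1/3}])\,dx$ and substituting $v = x^{1/3} - (j+1)^{1/3}$ reduces it to $3\int_0^{\infty} e^{-3v}(v+(j+1)^{1/3})^2\,dv$, which expands term by term into an absolute constant plus a multiple of $(j+1)^{2/3}$, hence $S_j \leq C j^{2/3}$. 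The delicate point is controlling the nonstationary decay rate at all scales simultaneously; an equivalent route is to verify $(k-1)^{2/3}/k^{2/3} \geq 1 - a_k$ by concavity of $x \mapsto x^{2/3}$, turning the product into a telescoping ratio.

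Plugging $S_j \leq C j^{2/3}$ back, the noise contribution is $2 C \sigma^2 \sum_{j=1}^{K} a_j^2 j^{2/3} = 2 C \sigma^2 \sum_j j^{-2/3} \leq 6 C \sigma^2 K^{1/3}$, producing the $24 \sigma^2 K^{1/3}$ term. For the gradient part, the adaptive schedule yields $\tau_{j-1}^{2} = a_j^{2/3}/(\sum_{i<j}\|G_i\|^2)^{2/3}$ with $a_j^{2/3} = j^{-4/9}$, so
\[
\sum_j j^{2/3} \tau_{j-1}^{2} \|G_{j-1}\|^2 \;=\; \sum_j \frac{j^{2/9}\, \|G_{j-1}\|^2}{\bigl(\sum_{i<j}\|G_i\|^2\bigr)^{2/3}} \;\leq\; K^{2/9} \sum_j \frac{\|G_{j-1}\|^2}{\bigl(\sum_{i<j}\|G_i\|^2\bigr)^{2/3}}.
\]
Applying Lemma~\ref{lem:bound-bk} with $p = 2/3$ bounds the last sum by $3 \bigl(\sum_k \|G_k\|^2\bigr)^{1/3}$. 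Taking full expectations and using Jensen's inequality for the concave map $x \mapsto x^{1/3}$ converts $\mathbb{E}\bigl(\sum_k \|G_k\|^2\bigr)^{1/3}$ into $\bigl(\mathbb{E}\sum_k\|G_k\|^2\bigr)^{1/3}$, and collecting constants delivers the claimed inequality with the advertised constants $18$ and $24$.
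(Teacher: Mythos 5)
Your main argument is correct, but it follows a genuinely different route from the paper. The paper never unrolls the recursion: it divides \eqref{eq:recursive:epsilon} by $a_k$, sums over $k$, and then uses an Abel-type rearrangement in which the accumulated error terms are absorbed back into the left-hand side via the key estimate $\frac{1}{a_{k+1}}-\frac{1}{a_k}=k^{2/3}-(k-1)^{2/3}\le \frac23<1$, so that a factor $\frac13\sum_k\mathbb{E}\|\epsilon_k\|^2$ survives and the final bound follows after multiplying by $3$; the gradient term is handled exactly as you do, via $\tau_{k-1}^2/a_k\lesssim K^{2/9}(\sum_{i<k}\|G_i\|^2)^{-2/3}$, Lemma~\ref{lem:bound-bk} with $p=2/3$, and Jensen. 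Your alternative is to unroll $b_k\le\sum_{j\le k}\prod_{i=j+1}^k(1-a_i)c_j$ (the $a_1=1$ boundary observation is the same one the paper implicitly relies on), swap sums, and prove the kernel bound $S_j\le Cj^{2/3}$ via $1-a\le e^{-a}$ plus an integral comparison; this is valid (with the correct indexing $a_i=(i-1)^{-2/3}$ the envelope is only stronger, and one gets $C\le 3$), and it is arguably more transparent about where the $K$-independent $j^{2/3}$ weight comes from, at the cost of a slightly heavier kernel computation than the paper's two-line absorption trick.

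Two cautions. First, your claimed ``equivalent route'' via $(1-a_k)\le\bigl(\tfrac{k-1}{k}\bigr)^{2/3}$ is \emph{not} equivalent: telescoping that ratio only gives $\prod_{i=j+1}^k(1-a_i)\le (j/k)^{2/3}$, whence $S_j\le j^{2/3}\sum_{k\ge j}k^{-2/3}\lesssim j^{2/3}K^{1/3}$, an extra $K^{1/3}$ that would degrade the noise term to order $\sigma^2K^{2/3}$ and break the lemma; the exponential envelope (or the paper's absorption argument) is genuinely needed. Second, the exact constants $18$ and $24$ are asserted rather than verified in your accounting: with $a_j=(j-1)^{-2/3}$ and $C\approx3$ the noise bookkeeping comes out close to, but not obviously below, $24\sigma^2K^{1/3}$, so either tighten the kernel constant or state the result with a slightly larger absolute constant (which is all that is used downstream in Theorem~\ref{Theorem of R2SRM}).
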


\begin{proof}

Dividing \eqref{eq:recursive:epsilon} by $a_k$ and summing over $k$ yields
\begin{equation}
 \sum_{k=1}^K \frac{1}{a_k}  \mathbb{E}[ \|\epsilon_k\|^2] \leq  \sum_{k=1}^K  (\frac{1}{a_k}-1)\mathbb{E}[\|\epsilon_{k-1}\|^2]+2\tilde{L}^2\sum_{k=1}^K \frac{\tau_{k-1}^2}{a_k}\mathbb{E}[\|G_{k-1}\|^2]+2\sigma^2 \sum_{k=1}^Ka_k.
\end{equation}
Rearranging terms results in
\begin{equation}\label{eq:bound-epslion}
\begin{aligned}
    \mathbb{E} \sum_{k=1}^K \|\epsilon_{k-1}\|^2 \leq & - \frac{\mathbb{E}[\|\epsilon_{T}\|^2]}{a_T} + \underbrace{\sum_{k=1}^{K-1}(\frac{1}{a_{k+1}} - \frac{1}{a_k})\mathbb{E}[\|\epsilon_k\|^2]}_{c_1} \\
    &+ 2\tilde{L}^2 \underbrace{\mathbb{E}[ \sum_{k=1}^K \tau_{k-1}^2 \|G_{k-1}\|^2/a_k ]}_{c_2} + 2\sigma^2 \sum_{k=1}^K a_k.
    \end{aligned}
\end{equation}
Let us bound $c_1,c_2$, individually.  For $c_1$,  since $a_{k+1} =  k^{-2/3}$, one has that for any $k\geq 2$,  $\frac{1}{a_{k+1}} - \frac{1}{a_k} = k^{2/3} - (k-1)^{2/3} \leq \frac{2}{3}(k-1)^{-1/3} \leq \frac{2}{3},$ which is due to the concavity of the function $t^{2/3}$. Moreover, $\frac{1}{a_2} - \frac{1}{a_1} = 0$.
Hence, we have
\begin{equation}
    \sum_{k=1}^{K-1}(\frac{1}{a_{k+1}} - \frac{1}{a_k})\mathbb{E}[\|\epsilon_k\|^2] \leq \frac{2}{3}\sum_{k=1}^K\mathbb{E}[\|\epsilon_k\|^2].
\end{equation}
For $c_2$, it follows from $ a_k \leq a_{k-1} \leq \cdots \leq 1$ that $$\tau_k = \frac{1}{(\sum_{i=1}^k\|G_i\|^2/a_{k+1})^{1/3}} \leq \frac{1}{(\sum_{i=1}^k\|G_i\|^2/a_{i+1})^{1/3}}.$$ 
Plugging this into $c_2$ yields
\begin{equation}
    \begin{aligned}
        & \mathbb{E}[ \sum_{k=1}^K \tau_{k-1}^2 \|G_{k-1}\|^2/a_k ]  \leq \mathbb{E} \sum_{k=1}^K \frac{\|G_{k-1}\|^2/a_k}{(\sum_{i=1}^{k-1}\|G_i\|^2/a_{i+1})^{2/3}} \\
        \leq & 3\mathbb{E}\left( \sum_{k=1}^{K-1} \|G_k\|^2/a_{k+1} \right)^{1/3}  \leq 3 K^{2/9} \left( \mathbb{E}\sum_{k=1}^K \|G_k\|^2\right)^{1/3},
    \end{aligned}
\end{equation}
where the second inequality uses \eqref{eq:bound-bk}, the last inequality follows from $\frac{1}{a_{k+1}} = k^{2/3} \leq K^{2/3}$ for each $k\leq K-1$.  Plugging $c_1, c_2$ into \eqref{eq:bound-epslion} implies
\begin{equation}
\begin{aligned}
    \mathbb{E}\sum_{k=1}^{K} \|\epsilon_{k-1}\|^2 & \leq - \frac{\mathbb{E}[\|\epsilon_{T}\|^2]}{a_T}  + \frac{2}{3}\sum_{k=1}^K\mathbb{E}[\|\epsilon_k\|^2] + 3 K^{2/9} \left( \mathbb{E}\sum_{k=1}^K \|G_k\|^2\right)^{1/3} + 2\sigma^2 \sum_{k=1}^K a_k \\
    & \leq - \mathbb{E}[\|\epsilon_{T}\|^2]  + \frac{2}{3}\sum_{k=1}^K\mathbb{E}[\|\epsilon_k\|^2] + 3 K^{2/9} \left( \mathbb{E}\sum_{k=1}^K \|G_k\|^2\right)^{1/3} + 8 \sigma^2 K^{1/3},
    \end{aligned}
\end{equation}
where the second inequality follows from the fact that $\sum_{k=1}^K a_k = 1 + \sum_{k=1}^{K-1}k^{2/3} \leq 1 + 3K^{1/3} \leq 4 K^{1/3}$. Rearranging the above inequality yields \eqref{eq:bound:epsilon-k}. 
The proof is completed. 
\end{proof}

Building those lemmas, we now present the proof of Theorem \ref{Theorem of R2SRM}.

\begin{proof}[Proof of Theorem \ref{Theorem of R2SRM}]
It follows from the update rule of Algorithm \ref{alg:stomanial} and Lemma \ref{Euclidean vs manifold1} that
\begin{equation}\label{proof theorem1}
\begin{aligned}
& F_{{\mu_{k}}}(x_{k+1}) \leq F_{\mu_k}(x_k)-\tau_k\langle \grad F_{\mu_k}(x_k), G_k\rangle+\frac{\tau_k^{2}\mathcal{G}}{2\mu_k}\|G_k\|^{2} \\
&=F_{\mu_k}(x_k)-\frac{\tau_k}{2}\| \grad F_{\mu_k}(x_k)\|^2-\frac{\tau_k}{2}\|G_k\|^2+\frac{\tau_k}{2}\|G_k- \grad F_{\mu_k}(x_k)\|^2+\frac{\tau_k^{2}\mathcal{G}}{2\mu_k}\|G_k\|^2 \\
&\leq F_{\mu_k}(x_k)-\frac{\tau_k}{2}\| \grad F_{\mu_k}(x_k)\|^2+\frac{\tau_k}{2}\|\delta_k- \grad f(x_k)\|^2+\frac{\tau_k^{2}\mathcal{G}}{2\mu_k} \|G_k\|^2,
\end{aligned}
\end{equation}
where the first equality uses $2\langle a,b \rangle = \|a\|^2 + \|b\|^2 - \|a - b\|^2$ for any $a,b\in \mathbb{R}^n$. 
 Since $h$ is Lipschitz continuous with $\ell_h$ by Assumption \ref{assumption-h}, it follows from Lemma \ref{eq:moreau-bound-1} that for all  $ x\in\mathcal{M}$, we have
    \begin{equation*}
      	\begin{aligned}
			F_{\mu_{k+1}}(x)  & \leq F_{\mu_k}(x) + \frac{1}{2} (\mu_k - \mu_{k+1}) \frac{\mu_k}{\mu_{k+1}} \| \nabla h_{\mu_k}(\mathcal{A}x)  \|^2 \\
			& \leq F_{\mu_k}(x) + (\mu_k - \mu_{k+1}) \ell_h^2, 
		\end{aligned}
    \end{equation*}
    where the second equality utilizes \eqref{h bound} and $\frac{\mu_k}{\mu_{k+1}} = (\frac{k+1}{k})^{1/3} \leq 2$.
   Set $x = x_{k+1}$ in the above inequality and add it to \eqref{proof theorem1}, we  obtain
    \begin{equation}
        \label{eq:Fk+1-k-1-1}F_{\mu_{k+1}}(x_{k+1}) \leq F_{\mu_k}(x_k) - \frac{\tau_k}{2} \| \grad F_{\mu_k}(x_k) \|^2  +\frac{\tau_k}{2}\|\epsilon_k\|^{2}+\frac{\mathcal{G}\tau_k^2}{2 \mu_k} \|G_k\|^2   + (\mu_k - \mu_{k+1}) \ell_h^2.
    \end{equation}
    Denote $\Psi_k = F_{\mu_k}(x_k) - F_* + \mu_k \ell_h^2$, where $F_* = f(x_*) + h(c(x_*))$ is the optimal value and $x_*\in \Mcal$ is the optimal solution (refer to Assumption \ref{assumption-F}).  We show that exists a constant $B$ such that $\Psi_k \leq B$ for any $k\geq 1$.  It follows that
    \begin{equation}\label{def:B}
        \begin{aligned}
            \Psi_k =& f(x_k) - f(x_*)  + h(\prox_{\mu_k h}(c(x_k)))  - h(c(x_k))+h(c(x_k)) - h(c(x_*)) \\
            &+ \frac{1}{2\mu_k}\|c(x_k) - \prox_{\mu_k h}(c(x_k))\|^2  + \mu_k \ell_h^2 \\
           \leq  &  L_f \|x - x_*\| + \ell_h \| c(x_k) - \prox_{\mu_k h}(c(x_k)) \| + \|c(x_k) - c(x_*)\| \\
           &+ \frac{1}{2\mu_k}\|c(x_k) - \prox_{\mu_k h}(c(x_k))\|^2  + \mu_k \ell_h^2 \\
           \leq & (L_f + L_c)D + \frac{5}{2} \ell_h^2 \mu_k \leq (L_f + L_c)D + \frac{5}{2} \ell_h^2 \mu_1=:B, 
        \end{aligned}
    \end{equation}
    the first inequality uses the Lipschitz continuity of $f$ and $h$, the second inequality follows from \eqref{h bound}.     Using $\Psi_k$ and rearranging  \eqref{eq:Fk+1-k-1-1},  we get
    \begin{equation}
      \| \grad F_{\mu_k}(x_k) \|^2 \leq \| \epsilon_k \|^2 + \frac{2}{\tau_k} ( \Psi_k - \Psi_{k+1} ) + \frac{\mathcal{G} \tau_k}{ \mu_k}  \|G_k\|^2.
    \end{equation}
    Summing over $k$ gives
    \begin{equation}\label{eq:bound-gradF-k}
        \begin{aligned}
           & \sum_{k=1}^K \| \grad F_{\mu_k}(x_k)  \|^2 
             \leq  \sum_{k=1}^K \|\epsilon_k\|^2 -\frac{2}{\tau_K} \Psi_{K+1}+ 2\sum_{k=1}^K ( \frac{1}{\tau_k} - \frac{1}{\tau_{k-1}}) \Psi_k + \sum_{k=1}^K \frac{\mathcal{G}\tau_k}{\mu_k} \|G_k\|^2 \\
             \leq & \sum_{k=1}^K \|\epsilon_k\|^2 + 2B \sum_{k=1}^K ( \frac{1}{\tau_k} - \frac{1}{\tau_{k-1}}) + \mathcal{G}\sum_{k=1}^K \frac{a_{k+1}^{1/3}\|G_k\|^2}{\mu_k(\sum_{i=1}^k \|G_i\|^2)^{1/3}} \\
             \leq & \sum_{k=1}^K \|\epsilon_k\|^2 + 2B \frac{1}{\tau_K} + 
\mathcal{G} K^{1/9} \sum_{k=1}^K \frac{\|G_k\|^2}{(\sum_{i=1}^k \|G_i\|^2)^{1/3}} \\
             \leq & \sum_{k=1}^K \|\epsilon_k\|^2 + 2B a_{K+1}^{-1/3}(\sum_{k=1}^K\|G_k\|^2)^{1/3} + 
\mathcal{G} K^{1/9} \sum_{k=1}^K \frac{\|G_k\|^2}{(\sum_{i=1}^k \|G_i\|^2)^{1/3}} \\
             \leq & \sum_{k=1}^K \|\epsilon_k\|^2 + 2B K^{2/9}(\sum_{i=1}^K\|G_k\|^2)^{1/3} + 
 \frac{3}{2}\mathcal{G} K^{1/9} (\sum_{k=1}^K \|G_k\|^2  )^{2/3},
        \end{aligned}
    \end{equation}
where the second inequality uses the update rule of $\tau_k$ in \eqref{eq:constant-cnd} and \eqref{def:B}, the third inequality utilizes $\frac{a_{k+1}^{1/3}}{\mu_k} = k^{1/9} \leq K^{1/9}$ for any $k\leq K$,  the last inequality follows from Lemma \ref{lem:bound-bk}.   Taking expectation on both sides of \eqref{eq:bound-gradF-k} yields
\begin{equation}\label{eq:bound-gradF-k-1}
   \mathbb{E} \sum_{k=1}^K \| \grad F_{\mu_k}(x_k)  \|^2  \leq \mathbb{E}\sum_{k=1}^K \|\epsilon_k\|^2 + 2B K^{2/9} \mathbb{E}(\sum_{k=1}^K\|G_k\|^2)^{1/3} + 
 \frac{3}{2}\mathcal{G} K^{1/9} \mathbb{E}(\sum_{k=1}^K \|G_k\|^2  )^{2/3}.
\end{equation}

Now we bound the term $\mathbb{E}\sum_{k=1}^K \|\grad F_{\mu_k}(x_k)\|^2$ by considering two cases.  

\textbf{Case 1: Assume that $\mathbb{E}\sum_{k=1}^K \|\epsilon_k\|^
2 \geq \frac{1}{2}\mathbb{E}\sum_{k=1}^K \|\grad F_{\mu_k}(x_k)\|^2$}. It follows from the update rule of $G_k$ that
\begin{equation}\label{eq:G_k-formula}
\begin{aligned}
  G_k & =   \delta_k - \grad f(x_k) + \grad f(x_k) + \mathcal{P}_{T_{x_k} \Mcal}( \nabla c(x_k)^\top \nabla h_{\mu_k}(c(x_k)) ) \\
& = \epsilon_k + \grad F_{\mu_k}(x_k).
  \end{aligned}
\end{equation}
Combining the condition of Case 1 with $\|G_k\|^2 \leq 2\|\grad F_{\mu_k}(x_k)\|^2 + 2\|\epsilon_k\|^2$, implies that $\mathbb{E}\sum_{k=1}^K \| G_k \|^2 \leq 6\mathbb{E}\sum_{k=1}^K \|\epsilon_k\|^2$. Plugging this inside \eqref{eq:bound:epsilon-k} yields
\begin{equation}
    \mathbb{E}\sum_{k=1}^K \|\epsilon_k\|^2 \leq 18\tilde{L}^2 K^{2/9} \left( 6\mathbb{E}\sum_{k=1}^K \|\epsilon_k\|^2\right)^{1/3} + 24 \sigma^2 K^{1/3}.
\end{equation}
Letting $\mathbb{E}\sum_{k=1}^K \|\epsilon_k\|^2$ be the term $x$ in Lemma \ref{lem:bound-x}, and using \eqref{eq:bound-x},  the above immediately implies that 
\begin{equation}
\begin{aligned}
    \mathbb{E}\sum_{k=1}^K \|\epsilon_k\|^2 &\leq2 (\frac{4}{3})^{1/2} ( 18 \cdot 6^{1/3}\tilde{L}^2K^{2/9})^{3/2} + 48 \sigma^2 K^{1/3}\\
    & \leq (432\sqrt{3} \tilde{L}^3 + 48\sigma^2)K^{1/3}.
    \end{aligned}
\end{equation}
Combining with the condition of Case 1 yields $ \mathbb{E}\sum_{k=1}^K \|\grad F_{\mu_k}(x_k)\|^2 \leq (864 \sqrt{3} \tilde{L}^3 + 96\sigma^2)K^{1/3}$.

\textbf{Case 2: Assume that $\mathbb{E}\sum_{k=1}^K \|\epsilon_k\|^2 \leq \frac{1}{2}\mathbb{E}\sum_{k=1}^K \|\grad F_{\mu_k}(x_k)\|^2$.} Combining the condition of Case 2 with $\|G_k\|^2 \leq 2\|\grad F_{\mu_k}(x_k)\|^2 + 2\|\epsilon_k\|^2$, implies that $\mathbb{E}\sum_{k=1}^K \| G_k \|^2 \leq 3\mathbb{E}\sum_{k=1}^K \|\grad F_{\mu_k}(x_k)\|^2$. Plugging this inside \eqref{eq:bound-gradF-k-1} yields
\begin{equation}
\begin{aligned}
    \mathbb{E}\sum_{k=1}^K \|\grad F_{\mu_k}(x_k)\|^2 
\leq &  \frac{1}{2} \mathbb{E}\sum_{k=1}^K \|\grad F_{\mu_k}(x_k)\|^2 + 2B K^{2/9}(3\mathbb{E}\sum_{i=1}^K\|\grad F_{\mu_k}(x_k)\|^2)^{1/3}  \\
&+ 
 \frac{3}{2}\mathcal{G} K^{1/9} (3\mathbb{E}\sum_{i=1}^K\|\grad F_{\mu_k}(x_k)\|^2  )^{2/3},
 \end{aligned}
\end{equation}
where we also used Jensen's inequality with respect to the concave functions $z^{1/3}$ and $z^{2/3}$ defined over $\mathbb{R}_+$. This implies that
\begin{equation}
\begin{aligned}
&    \mathbb{E}\sum_{k=1}^K \|\grad F_{\mu_k}(x_k)\|^2 \\ \leq & 8BK^{2/9} ( \mathbb{E}\sum_{k=1}^K \|\grad F_{\mu_k}(x_k)\|^2)^{1/3} + 9 \mathcal{G}K^{1/9}(\mathbb{E}\sum_{k=1}^K \|\grad F_{\mu_k}(x_k)\|^2 )^{2/3}.
\end{aligned}
\end{equation}
Using Lemma \ref{lem:bound-x}, the above immediately implies $$
\begin{aligned}
\mathbb{E}\sum_{k=1}^K \|\grad F_{\mu_k}(x_k)\|^2&  \leq  2 (\frac{4}{3})^{1/2}
 (8B K^{2/9})^{3/2} + 2(\frac{8}{3})^2 (9\mathcal{G} K^{1/9})^{3}    \\
 & \leq (64 B^{3/2} + 9^4 \mathcal{G}^3) K^{1/3}.
 \end{aligned}$$ 
Combining with two cases completes the proof.  
\end{proof}

\subsection{Proof of Section \ref{sec:indicator}}

        The following lemma establishes a relationship between $g(x_{k+1})$ and $g(x_k)$, which will be used to
derive bounds for $\text{dist}^2(c(x),\mathcal{C})$. 
     \begin{lemma}
        Suppose that Assumptions \ref{general assumption}, \ref{assum:h-2} and \ref{assum:error-bound} hold, and $\{x_k\}$ is generated by Algorithm \ref{alg:stomanial-1}. If $c_{\tau} \leq \frac{1}{L_g}$, where $L_g$ is defined in \eqref{def:L}, then we have
        \begin{equation}\label{eq:diff-gk-k-1}
            g(x_{k+1}) - g(x_k) \leq  -   \frac{\tau_k \zeta^2}{2\mu_k} (g(x_k))^{\theta}  +  \frac{L_f^2}{2} \tau_k\mu_k.
        \end{equation}
    \end{lemma}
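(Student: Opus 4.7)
The plan is to derive a one-step descent inequality for $g$ along the iterates $x_{k+1}=\mathcal{R}_{x_k}(-\tau_k G_k)$, and then convert the resulting $\|\grad g(x_k)\|^2$ term into $g(x_k)^\theta$ via the error bound \eqref{eq:error-bound}. First I would verify that $g$ itself is retr-smooth on $\Mcal$ with some constant $L_g$. Since $\mathcal{M}$ is compact and $c$ is smooth, the map $x\mapsto c(x)-\mathcal{P}_{\mathcal{C}}(c(x))$ is bounded, and an argument identical to Case~2 of Lemma~\ref{Euclidean vs manifold1} (but without the $1/\mu$ factor, since $g$ is $\mu$-independent) shows that $\nabla g(x)=\nabla c(x)^\top(c(x)-\mathcal{P}_{\mathcal{C}}(c(x)))$ is Lipschitz in the ambient space, so Proposition~\ref{prop:retr} furnishes an $L_g$ with $g(\mathcal{R}_x(\eta))\le g(x)+\langle \grad g(x),\eta\rangle+\tfrac{L_g}{2}\|\eta\|^2$. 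This is the constant referenced in \eqref{def:L}.

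Next I would exploit the explicit form of the estimator. Under Assumption~\ref{assum:h-2}, $\nabla h_{\mu_k}(c(x_k))=\mu_k^{-1}(c(x_k)-\mathcal{P}_{\mathcal{C}}(c(x_k)))$, so the algorithm's update direction satisfies
\[
G_k=\delta_k+\frac{1}{\mu_k}\grad g(x_k).
\]
Plugging $\eta=-\tau_k G_k$ into the retr-smoothness inequality and expanding the inner product yields
\[
g(x_{k+1})\le g(x_k)-\tau_k\langle \grad g(x_k),\delta_k\rangle-\frac{\tau_k}{\mu_k}\|\grad g(x_k)\|^2+\frac{L_g\tau_k^2}{2}\|G_k\|^2.
\]
The cross term is controlled by Young's inequality, $-\tau_k\langle \grad g(x_k),\delta_k\rangle\le\tfrac{\tau_k}{2\mu_k}\|\grad g(x_k)\|^2+\tfrac{\tau_k\mu_k}{2}\|\delta_k\|^2$, where the truncation in \eqref{def:delta:indicator} guarantees $\|\delta_k\|\le L_f$.

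For the quadratic term, I would use $\|G_k\|^2\le 2L_f^2+\tfrac{2}{\mu_k^2}\|\grad g(x_k)\|^2$ together with the step-size conditions $\tau_k\le c_\tau\le 1/L_g$ and the key scheduling fact $\tau_k/\mu_k\le c_\tau$ (since $\tau_k=c_\tau(k+1)^{-\omega}$ and $\mu_k=k^{-\omega}$). These combine to give $\tfrac{L_g\tau_k^2}{\mu_k^2}\|\grad g(x_k)\|^2\le \tfrac{L_gc_\tau \tau_k}{\mu_k}\|\grad g(x_k)\|^2$, which can be absorbed into a fraction of the $-\tau_k/\mu_k\|\grad g(x_k)\|^2$ term, while $L_g\tau_k^2 L_f^2\le \tau_k L_f^2\cdot (L_g c_\tau\mu_k)$ is absorbed into $\tau_k\mu_k L_f^2$. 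After collecting terms one arrives at
\[
g(x_{k+1})\le g(x_k)-\frac{\tau_k}{2\mu_k}\|\grad g(x_k)\|^2+\frac{L_f^2}{2}\tau_k\mu_k.
\]
Finally I would apply Assumption~\ref{assum:error-bound}: $\|\grad g(x_k)\|^2\ge \zeta^2\mathrm{dist}^{2\theta}(c(x_k),\mathcal{C})=\zeta^2(2g(x_k))^\theta\ge \zeta^2 g(x_k)^\theta$ (using $\theta\ge 1$), which converts the gradient-norm term to the stated $\tfrac{\tau_k\zeta^2}{2\mu_k}g(x_k)^\theta$.

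The main obstacle is step four: engineering the bound on $L_g\tau_k^2\|G_k\|^2$ so that the positive $\|\grad g(x_k)\|^2/\mu_k^2$ contribution does not overwhelm the negative $\|\grad g(x_k)\|^2/\mu_k$ contribution coming from the first-order part. This is precisely where both the step-size cap $c_\tau\le 1/L_g$ and the ratio bound $\tau_k/\mu_k\le c_\tau$ enter essentially; without the latter, decreasing $\mu_k$ would break the descent. Once this balancing is performed, the rest of the proof is a routine combination of Young's inequality and the error-bound condition.
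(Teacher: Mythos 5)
Your overall strategy (retr-smoothness of $g$, the identity $G_k=\delta_k+\frac{1}{\mu_k}\grad g(x_k)$, truncation giving $\|\delta_k\|\le L_f$, then the error bound) matches the paper, but the bookkeeping in your step four has a genuine gap and, as written, does not yield \eqref{eq:diff-gk-k-1} under the stated hypothesis $c_\tau\le 1/L_g$. After your Young step you have already spent half of the negative term, leaving only $-\frac{\tau_k}{2\mu_k}\|\grad g(x_k)\|^2$. Your bound $\|G_k\|^2\le 2L_f^2+\frac{2}{\mu_k^2}\|\grad g(x_k)\|^2$ then re-injects $\frac{L_g\tau_k^2}{\mu_k^2}\|\grad g(x_k)\|^2$, which under $\tau_k/\mu_k\le c_\tau\le 1/L_g$ is only bounded by $\frac{\tau_k}{\mu_k}\|\grad g(x_k)\|^2$ --- strictly larger than the remaining budget, so no descent survives. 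To rescue your route you would need a stronger cap such as $c_\tau\lesssim\frac{1}{8L_g}$, and even then the surviving coefficients would be weaker than $\frac{\zeta^2}{2}$ on $(g(x_k))^\theta$ and larger than $\frac{L_f^2}{2}$ on $\tau_k\mu_k$ (your cross term alone already costs $\frac{\tau_k\mu_k}{2}L_f^2$, and $L_g\tau_k^2L_f^2$ adds up to another $\tau_k\mu_k L_f^2$), so the lemma with its exact constants --- which the subsequent induction (the choices of $C_g$ and $\tilde K$) relies on --- would not follow.

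The paper avoids this by not splitting the inner product at all: it applies the polarization identity $2\langle a,b\rangle=\|a\|^2+\|b\|^2-\|a-b\|^2$ with $a=\grad g(x_k)$ and $b=\mu_k G_k$ inside $-\frac{\tau_k}{\mu_k}\langle\grad g(x_k),\mu_k G_k\rangle$. This produces, in one stroke, the full $-\frac{\tau_k}{2\mu_k}\|\grad g(x_k)\|^2$, a negative term $-\frac{\tau_k\mu_k}{2}\|G_k\|^2$ that exactly absorbs the smoothness term $\frac{L_g\tau_k^2}{2}\|G_k\|^2$ whenever $\tau_k\le\mu_k/L_g$ (i.e.\ $c_\tau\le 1/L_g$), and a single error term $\frac{\tau_k}{2\mu_k}\|\grad g(x_k)-\mu_k G_k\|^2=\frac{\tau_k\mu_k}{2}\|\delta_k\|^2\le\frac{\tau_k\mu_k}{2}L_f^2$. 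Replacing your Young-plus-crude-quadratic-bound step with this identity closes the gap and gives the stated constants; the rest of your argument (retr-smoothness of $g$ via Proposition \ref{prop:retr}, the error-bound conversion $\|\grad g(x_k)\|^2\ge\zeta^2(2g(x_k))^\theta\ge\zeta^2(g(x_k))^\theta$) is fine.
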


\begin{proof}
Before proving main result, we need to show that $g$ is retr-smooth over $\mathcal{M}$. Let us denote $r(x): = c(x) - \mathcal{P}_{\mathcal{C}}(c(x))$.   We first show that $g(x)$ is smooth over the convex hull $\text{conv}(\Mcal)$. 
Note that  Since $\mathcal{M}$ is a compact submanifold, we know from $r$ is continuous that there exists $C_r>0$ such that $\|r(x)\| \leq C_r$ for $x\in \mathcal{M}$. 
It follows from Lemma \ref{assumption-B} that for any $x,y\in \mathcal{M}$, it holds 
\begin{equation}
\begin{aligned}
& \|\nabla g(x) - \nabla g(y) \| = \| \nabla c(x)^\top r(x) - \nabla c(y)^\top r(y) \| \\
    \leq & \|\nabla c(x)^\top r(x) - \nabla c(x)^\top r(y)  \| + \|\nabla c(x)^\top r(y) - \nabla c(y)^\top r(y)  \| \\
    \leq &  L_c \|r(x) - r(y)\| + C_r \| \nabla c(x) - \nabla c(y)\| \\
     \leq & (L_c + C_r L_{\nabla c}) \|x-y\|.
    \end{aligned}
\end{equation}
Therefore, $g$ is $L_c + C_r L_{\nabla c}$-smooth on $\mathcal{M}$. Moreover, the gradient of $g$ is bounded: $\|\nabla g(x)\| \leq \|\nabla c(x)\| \|r(x)\| = L_c C_r$.  For notational convenience, we define
\begin{equation}\label{def:L}
    L_g:=\alpha^2 (L_c + C_r L_{\nabla c}) + 2 ( L_c C_r) \beta.
\end{equation}
 It follows from Proposition \ref{prop:retr} that $g$ is retr-smooth with constant $L_g$. 
By Algorithm \ref{alg:stomanial-1}, we have that
\begin{equation}
        \begin{aligned}
          &  g(x_{k+1})   \leq g(x_k) - \frac{\tau_k}{\mu_k} \langle \grad g(x_k), \mu_k  G_k \rangle + \frac{\tau_k^2 L_g}{2}\|G_k\|^2 \\
            & \leq g(x_k) -  \frac{\tau_k}{2\mu_k} \left(  
\|\grad g(x_k) \|^2 + \mu_k^2 \|G_k\|^2 - \| \grad g(x_k) - \mu_k G_k \|^2 \right)+ \frac{\tau_k^2 L_g}{2}\|G_k\|^2 \\
& \leq  g(x_k) -   \frac{\tau_k \zeta^2}{2\mu_k} \mathrm{dist}^{2\theta}(c(x_k),\mathcal{C}) -   ( \frac{\tau_k\mu_k}{2} - \frac{\tau_k^2 L_g}{2} ) \|G_k\|^2 +  \frac{\tau_k}{2\mu_k} \| \grad g(x_k) - \mu_k G_k \|^2 \\
& \leq  g(x_k) -   \frac{\tau_k \zeta^2}{2\mu_k} (g(x_k))^{\theta}  +  \frac{L_f^2}{2} \tau_k\mu_k,
        \end{aligned}
    \end{equation}
    where the second inequality uses $2\langle a,b \rangle = \|a\|^2 + \|b\|^2 - \|a - b\|^2$ for any $a,b\in \mathbb{R}^n$. the third inequality follows from  Assumption \ref{assum:error-bound}, the last inequality utilizes $\| \grad g(x_k) - \mu_k G_k\|^2 = \|\mu_k \mathcal{P}_{T_{x_k}\mathcal{M}}(\delta_k)   \|^2 \leq \mu_k^2 \|\delta_k\|^2 \leq \mu_k^2 L_f^2$ by \eqref{def:delta:indicator} and $\tau_k \leq \frac{\mu_k}{L_g}$ by  $c_{\tau} \leq \frac{1}{L_g}$.

\end{proof}

\begin{lemma}
     Suppose that Assumptions \ref{general assumption}, \ref{assum:h-2} and \ref{assum:error-bound} hold, and $\{x_k\}$ is generated by Algorithm \ref{alg:stomanial-1}.  Let us denote $\tilde{K} := 
 \lceil \frac{8\omega}{c_{\tau} \zeta^2}  \rceil$ and
    \begin{equation}\label{eq:cond-mu}
    C_{g} := \left\{
    \begin{array}{cc}
     \max\{  \frac{1}{2}L_c D \tilde{K}^{\frac{2\omega}{\theta}}, \frac{2L_f^2}{\zeta^2}\}  &  \text{if} ~~~\theta = 1 \\
        \max \left\{1,\frac{1}{2}L_c D \tilde{K}^{\frac{2\omega}{\theta}}, (\frac{8\omega}{\theta c_{\tau}\zeta^2})^{\frac{1}{\theta-1}} \right\} & \text{otherwise} 
    \end{array}
    \right.
\end{equation}
If $c_{\tau} \leq \frac{1}{L_g}$, where $L_g$ is defined in \eqref{def:L}, and  $\omega \leq \frac{1}{2}$, then we have that for any $k\geq \tilde{K}$,
    \begin{equation}\label{eq:bound:g}
        \text{dist}^2(c(x_k), \mathcal{C}) \leq 2C_g k^{-\frac{2\omega}{\theta}}.
    \end{equation}
\end{lemma}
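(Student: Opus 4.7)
The plan is to prove \eqref{eq:bound:g} by induction on $k \geq \tilde{K}$. Substituting $\tau_k = c_\tau(k+1)^{-\omega}$ and $\mu_k = k^{-\omega}$ into the one-step inequality \eqref{eq:diff-gk-k-1} gives the recursion
\begin{equation*}
    g(x_{k+1}) \;\leq\; g(x_k) \;-\; \frac{c_\tau \zeta^2}{2}\Bigl(\frac{k}{k+1}\Bigr)^{\!\omega}\!\bigl(g(x_k)\bigr)^\theta \;+\; \frac{L_f^2 c_\tau}{2(k+1)^{\omega}k^{\omega}}.
\end{equation*}
Writing $\phi_k := C_g k^{-2\omega/\theta}$, the goal is to establish $g(x_k) \leq \phi_k$ for every $k \geq \tilde{K}$.

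For the base case $k=\tilde{K}$, I would use boundedness of $\Mcal$ (with diameter $D$), Lipschitz continuity of $c$, and the existence of a feasible optimal $x_* \in \Mcal$ (implied by Assumption \ref{assumption-F}, which forces $c(x_*) \in \mathcal{C}$) to obtain a crude dimensional bound on $g(x_{\tilde{K}})$; the first branch $\tfrac{1}{2}L_c D \,\tilde{K}^{2\omega/\theta}$ of \eqref{eq:cond-mu} is calibrated precisely so that this value is dominated by $C_g \tilde{K}^{-2\omega/\theta}$. For the inductive step, assume $g(x_k) \leq \phi_k$; I would split into two subcases. If $g(x_k) \leq \tfrac{1}{2}\phi_{k+1}$, I drop the negative decrease term and use $k \geq \tilde{K}$ to ensure $\tfrac{L_f^2 c_\tau}{2(k+1)^\omega k^\omega} \leq \tfrac{1}{2}\phi_{k+1}$, which yields $g(x_{k+1}) \leq \phi_{k+1}$. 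If $g(x_k) > \tfrac{1}{2}\phi_{k+1}$, then the error-bound decrease is at least of order $\tfrac{c_\tau\zeta^2}{2^{\theta+1}} C_g^\theta (k+1)^{-2\omega}$, and I would show this dominates the telescoping gap $\phi_k - \phi_{k+1} \leq \tfrac{2\omega}{\theta}C_g k^{-2\omega/\theta-1}$ together with the noise, closing the induction.

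The main obstacle is the simultaneous calibration of $C_g$ and $\tilde{K}$ and the associated exponent bookkeeping arising from $\omega = \min\{\theta/(\theta+2), 1/2\}$. In the large-$g(x_k)$ subcase, comparing $c_\tau\zeta^2 C_g^\theta(k+1)^{-2\omega}$ with $\tfrac{2\omega}{\theta}C_g k^{-2\omega/\theta-1}$ forces $C_g^{\theta-1}\gtrsim \tfrac{\omega}{\theta c_\tau \zeta^2}$ when $\theta > 1$, yielding the third branch $\bigl(\tfrac{8\omega}{\theta c_\tau \zeta^2}\bigr)^{1/(\theta-1)}$ in \eqref{eq:cond-mu}; when $\theta = 1$ this power collapses to $1$, so one instead absorbs $\tfrac{2\omega}{\theta}$ into a geometric rate, which is exactly what the choice $\tilde{K} := \lceil 8\omega/(c_\tau\zeta^2)\rceil$ provides. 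Checking that $\omega \leq \tfrac{1}{2}$ keeps the various exponents non-negative across both regimes, and tracking the factors $(k/(k+1))^\omega$ uniformly in $k \geq \tilde{K}$, is the last piece of case-analysis that ties everything together.
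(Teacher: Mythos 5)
Your proposal follows the same induction skeleton as the paper's proof: the identical base case at $k=\tilde K$ via compactness and Lipschitz continuity of $c$ (this is exactly what the branch $\tfrac12 L_cD\,\tilde K^{2\omega/\theta}$ of \eqref{eq:cond-mu} is for), the same one-step recursion \eqref{eq:diff-gk-k-1}, and the same exponent bookkeeping ($k^{-2\omega/\theta-1}\le k^{-2\omega}$ from $\omega\le\tfrac12$, $(k+1)^{-2\omega}\ge \tfrac12 k^{-2\omega}$, etc.). Where you genuinely differ is in how the inductive step is closed: the paper substitutes the hypothesis $g(x_k)\le C_gk^{-2\omega/\theta}$ simultaneously into the linear term and into the negative term $-\tfrac{\tau_k\zeta^2}{2\mu_k}(g(x_k))^\theta$ in \eqref{temp:g}, splits only on $\theta=1$ versus $\theta>1$, and absorbs the noise $\tfrac{L_f^2}{2}\tau_k\mu_k$ into half of the decrease using $C_g\ge 2L_f^2/\zeta^2$; you instead split on whether $g(x_k)\le\tfrac12 C_g(k+1)^{-2\omega/\theta}$. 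Your route is arguably the more careful one, since replacing $g(x_k)$ by its upper bound inside the \emph{negative} term, as the paper does, really needs a monotonicity argument for $t\mapsto t-ct^\theta$ on the relevant range, which is precisely what your case analysis circumvents. The price is in the constants: in your large-$g$ subcase the decrease is only $\gtrsim 2^{-\theta}c_\tau\zeta^2C_g^\theta(k+1)^{-2\omega}$, and in your small-$g$ subcase the decrease term is too weak to absorb the noise, so you must compare $\tfrac{c_\tau L_f^2}{2}k^{-2\omega}$ directly with $\tfrac12 C_g(k+1)^{-2\omega/\theta}$, which forces an additional requirement of the form $C_g\gtrsim c_\tau L_f^2$ that is neither a branch of \eqref{eq:cond-mu} nor implied by it (and invoking $k\ge\tilde K$ does not help when $\theta=1$, since then the two sides have the same $k$-exponent). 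Consequently your plan establishes the decay \eqref{eq:bound:g} with a $C_g$ inflated by $\theta$-dependent factors and an extra $c_\tau L_f^2$-type branch rather than with the exact constant stated; this is immaterial for the downstream complexity bound, but to obtain the lemma verbatim you should either close the induction by direct substitution as in the paper (supplying the monotonicity check) or accept the recalibrated constant.
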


\begin{proof}
We now prove this by induction. For $k = \tilde{K}$, it holds that
\begin{equation}
    g(x_{\tilde{K}})  = \frac{1}{2}\text{dist}^2(c(x_{\tilde{K}}), \mathcal{C}) \leq \frac{1}{2} \|c(x_{\tilde{K}}) - c(x_*)\|^2 \leq \frac{1}{2}  L_c D  \leq  C_g \tilde{K}^{-\frac{2\omega}{\theta}}, 
\end{equation}
where the last inequality uses $C_g \geq \frac{1}{2}L_c D \tilde{K}^{\frac{2\omega}{\theta}}$. 
Now suppose for induction that $g(x_k) \leq C_g  k^{-\frac{2\omega}{\theta}}$ holds for some $k\geq \tilde{K}$. Then
\begin{equation}\label{temp:g}
    \begin{aligned}
        g(x_{k+1}) - C_g (k+1)^{-\frac{2\omega}{\theta}} & \leq g(x_k)  -   \frac{\tau_k \zeta^2}{2\mu_k} (g(x_k))^{\theta}  +  \frac{L_f^2}{2} \tau_k\mu_k- C_g (k+1)^{-\frac{2\omega}{\theta}} \\
        & \leq C_g  k^{-\frac{2\omega}{\theta}} -\frac{1}{2} c_{\tau}\zeta^2 C_g^{\theta} k^{-2\omega} + \frac{c_\tau  L_f^2}{2} k^{-2\omega} - C_g (k+1)^{-\frac{2\omega}{\theta}} \\
        & \leq C_g \frac{2\omega}{\theta} k^{\frac{-2\omega}{\theta}-1} - \frac{1}{2}c_{\tau}\zeta^2 C_g^{\theta} k^{-2\omega} + \frac{c_\tau   L_f^2}{2} k^{-2\omega},
    \end{aligned}
\end{equation}
where the first inequality uses \eqref{eq:diff-gk-k-1}, the second inequality is due to that $\mu_k =  k^{-\omega}$ and $\tau_k = c_{\tau} k^{-\omega}$, the third inequality follows from $(k+1)^{-\frac{2\omega}{\theta}} - k^{-\frac{2\omega}{\theta}} \geq \frac{-2\omega}{\theta} k^{-\frac{2\omega}{\theta}-1}$ thanks to the convexity of $t^{-\frac{2\omega}{\theta}}$.

Now we analyze \eqref{temp:g} by considering two cases for $\theta$. When $\theta = 1$, it holds that 
\begin{equation}
    \begin{aligned}
       & g(x_{k+1}) - C_g (k+1)^{-\frac{2\omega}{\theta}} \\
         \leq & 2\omega C_g  k^{-2\omega-1} - \frac{1}{2}c_{\tau}\zeta^2 C_g k^{-2\omega} + \frac{c_\tau   L_f^2}{2} k^{-2\omega} \\
         \leq & 2\omega C_g  k^{-2\omega-1} - \frac{1}{4}c_{\tau}\zeta^2 C_g k^{-2\omega}  =  (2\omega C_g k^{-1} - \frac{1}{4}c_{\tau}\zeta^2 C_g) k^{-2\omega} \leq 0,
    \end{aligned}
\end{equation}
where the second inequality uses $C_g \geq \frac{ 2L_f^2}{\zeta^2}$, the last inequality utilizes $k \geq 
 \lceil \frac{8\omega}{c_{\tau} \zeta^2}  \rceil $. For $\theta >1$, since that $\omega \leq  \frac{1}{2} \leq \frac{\theta}{2(\theta-1)}$, one obtains $\frac{2\omega}{\theta} + 1 \geq   2\omega$. Therefore, it follows from \eqref{temp:g} that
 \begin{equation}
     \begin{aligned}
         g(x_{k+1}) - C_g (k+1)^{-\frac{2\omega}{\theta}} 
         \leq & C_g \frac{2\omega}{\theta} k^{-2\omega} -\frac{1}{2} c_{\tau}\zeta^2 C_g^{\theta} k^{-2\omega} + \frac{c_\tau   L_f^2}{4} k^{-2\omega} \\
          \leq &  -\frac{1}{2} c_{\tau}\zeta^2 C_g^{\theta} k^{-2\omega} + \frac{c_\tau   L_f^2}{2} k^{-2\omega} \leq 0,
     \end{aligned}
 \end{equation}
where the second inequality utilizes $C_g \geq (\frac{8\omega}{\theta c_{\tau}\zeta^2})^{\frac{1}{\theta-1}}$, the last inequality uses $C_g \geq \max\{1, \frac{ 2L_f^2}{\zeta^2}\}$. In conclusion, for any $k \geq 
\tilde{K}$, it holds that
\begin{equation}
    g(x_k) \leq C_g k^{\frac{-2\omega}{\theta}}. 
\end{equation}
According to the definition of $g$ in \eqref{def:g}, we obtain \eqref{eq:bound:g} and  complete the proof.

\end{proof}

Similar to Lemma \ref{Estimation error bound}, we now bound the estimation error of the recursive momentum estimator in Algorithm \ref{alg:stomanial-1}.  
\begin{lemma}[Estimation error bound]\label{Estimation error bound-2}
Suppose Assumptions \ref{general assumption} holds and consider Algorithm \ref{alg:stomanial-1}. Let us denote $\epsilon_k = \delta_k - \grad f(x_k)$. Then the expected estimation error of the estimator is bounded by
\begin{equation}\label{eq:recursive:epsilon-1}
    \mathbb{E}[ \|\epsilon_k\|^2] \leq   (1-a_k)^2\big(1+4\tau_{k-1}^2\tilde{L}^2\big)\|\epsilon_{k-1}\|^2+4(1-a_k)^2\tau_{k-1}^2\tilde{L}^2\| \grad F_{\mu_{k-1}}(x_{k-1})\|^2+2a_k^2\sigma^2.
\end{equation}
\end{lemma}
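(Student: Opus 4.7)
The plan is to mirror the proof of Lemma \ref{Estimation error bound}, with two essential modifications: one to strip off the truncation operator $\mathcal{P}_{\mathcal{B}_{L_f}}$, and another to convert the $\|G_{k-1}\|^2$ term that naturally appears in such a recursion into the required $\|\grad F_{\mu_{k-1}}(x_{k-1})\|^2$ contribution plus a residual $\|\epsilon_{k-1}\|^2$ contribution.

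First, I would dispose of the projection. By Assumption \ref{assumption-B}, $f$ is $L_f$-Lipschitz on $\Mcal$, so $\|\grad f(x_k)\| \leq L_f$, which places $\grad f(x_k)$ inside the convex ball $\mathcal{B}_{L_f}$. Writing
\[
u_k := \grad \tilde f(x_k,\xi_k) + (1-a_k)\,\mathcal{T}_{x_{k-1}}^{x_k}\bigl(\delta_{k-1} - \grad \tilde f(x_{k-1},\xi_k)\bigr),
\]
so that $\delta_k = \mathcal{P}_{\mathcal{B}_{L_f}}(u_k)$, the non-expansivity of projection onto a convex set gives
\[
\|\epsilon_k\| \;=\; \bigl\|\mathcal{P}_{\mathcal{B}_{L_f}}(u_k) - \mathcal{P}_{\mathcal{B}_{L_f}}(\grad f(x_k))\bigr\| \;\leq\; \|u_k - \grad f(x_k)\|.
\]
From this point the analysis is in terms of $u_k - \grad f(x_k)$, which is what Lemma \ref{Estimation error bound} already handles.

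Second, I would reuse the decomposition from the proof of Lemma \ref{Estimation error bound}, splitting $u_k - \grad f(x_k)$ into the transported previous error $(1-a_k)\mathcal{T}_{x_{k-1}}^{x_k}\epsilon_{k-1}$, the bias-free noise term $a_k(\grad\tilde f(x_k,\xi_k)-\grad f(x_k))$, and the transport-induced term $(1-a_k)Z_k$. Conditioning on $\mathcal{F}_k$ and using the isometry of vector transport together with the unbiasedness of the stochastic gradient (Assumption \ref{assumption-D}), the cross-term with $\mathcal{T}_{x_{k-1}}^{x_k}\epsilon_{k-1}$ vanishes in expectation. Applying $\|a+b\|^2\leq 2\|a\|^2+2\|b\|^2$ and $\mathbb{E}\|X-\mathbb{E}X\|^2\leq \mathbb{E}\|X\|^2$, then invoking Assumption \ref{assumption-E} with the retraction bound $\|x_k-x_{k-1}\|\leq \tau_{k-1}\|G_{k-1}\|$, yields the intermediate inequality
\[
\mathbb{E}[\|\epsilon_k\|^2\mid\mathcal{F}_k] \;\leq\; (1-a_k)^2\|\epsilon_{k-1}\|^2 + 2a_k^2\sigma^2 + 2(1-a_k)^2\tau_{k-1}^2\tilde{L}^2\|G_{k-1}\|^2.
\]

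Third, to reshape this into the form stated in \eqref{eq:recursive:epsilon-1}, I would invoke the identity $G_{k-1} = \epsilon_{k-1} + \grad F_{\mu_{k-1}}(x_{k-1})$ (obtained exactly as in \eqref{eq:G_k-formula}) and apply $\|a+b\|^2\leq 2\|a\|^2+2\|b\|^2$ once more:
\[
2\tau_{k-1}^2\tilde L^2\|G_{k-1}\|^2 \;\leq\; 4\tau_{k-1}^2\tilde L^2\|\epsilon_{k-1}\|^2 + 4\tau_{k-1}^2\tilde L^2\|\grad F_{\mu_{k-1}}(x_{k-1})\|^2.
\]
Multiplying by $(1-a_k)^2$, absorbing the $\|\epsilon_{k-1}\|^2$ contribution into the leading coefficient to produce the factor $(1-a_k)^2(1+4\tau_{k-1}^2\tilde L^2)$, and taking full expectation delivers exactly \eqref{eq:recursive:epsilon-1}.

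I do not anticipate a real obstacle; the only delicate point is confirming that the truncation is harmless (handled by the observation $\grad f(x_k)\in \mathcal{B}_{L_f}$) and that the conditional-expectation calculation still decouples the noise from the history under the truncation — which it does, since non-expansivity is a deterministic, pointwise inequality and is applied before any expectation is taken.
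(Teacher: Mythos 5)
Your proposal is correct and follows essentially the same route as the paper: dispose of the truncation via non-expansivity of $\mathcal{P}_{\mathcal{B}_{L_f}}$ (using $\|\grad f(x_k)\|\le L_f$), reuse the recursion from Lemma \ref{Estimation error bound}, and then substitute $G_{k-1}=\epsilon_{k-1}+\grad F_{\mu_{k-1}}(x_{k-1})$ with $\|a+b\|^2\le 2\|a\|^2+2\|b\|^2$ to obtain \eqref{eq:recursive:epsilon-1}. No gaps.
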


\begin{proof}
    The only difference between the two algorithms lies in the stochastic gradient estimator. Note that $\|\grad f(x_k)\| \leq \|\nabla f(x_k)\| \leq L_f $ by Assumption \ref{general assumption} and hence $\grad f(x_k) = \mathcal{P}_{\mathcal{B}_{L_f}}(\grad f(x_k))$. Using the the nonexpansiveness of $\mathcal{P}_{\mathcal{B}_{L_f}}$, we obtain 
    \begin{equation}
    \begin{aligned}
        &  \|\delta_k - \grad f(x_k)\|  \\
        = & \|\mathcal{P}_{\mathcal{B}_{L_f}}( \grad \tilde{f}(x_k,\xi_k) + (1-a_{k-1})\mcT_{x_{k-1}}^{x_k}(\delta_{k-1} - \grad \tilde{f}(x_{k-1},\xi_k)) ) - \mathcal{P}_{\mathcal{B}_{L_f}}(\grad f(x_k))\| \\
        \leq  & \|\grad \tilde{f}(x_k,\xi_k) + (1-a_{k-1})\mcT_{x_{k-1}}^{x_k}(\delta_{k-1} - \grad \tilde{f}(x_{k-1},\xi_k)) ) - \grad f(x_k)\|.
        \end{aligned}
    \end{equation}
    Therefore, we can obtain the same result as in Lemma \ref{Estimation error bound}, namely,
    $$
    \mathbb{E}[ \|\epsilon_k\|^2] \leq   (1-a_k)\mathbb{E}[\|\epsilon_{k-1}\|^2]+2a_k^2\sigma^2+2\tilde{L}^2\tau_{k-1}^2\mathbb{E}[\|G_{k-1}\|^2].
    $$
    Moreover, since $G_{k-1} = \epsilon_{k-1} + \grad F_{\mu_{k-1}}(x_{k-1})$, substituting this into the above inequality completes the proof.
\end{proof}

Finally, we give the proof of Theorem \ref{Theorem of R2SRM-1}.
\begin{proof}[Proof of Theorem \ref{Theorem of R2SRM-1}]
Similar to the proof in Theorem \ref{Theorem of R2SRM}, one has from Lemma \ref{Euclidean vs manifold1} that
\begin{equation}\label{proof theorem1-1}
\begin{aligned}
& F_{{\mu_{k}}}(x_{k+1}) \leq F_{\mu_k}(x_k)-\langle \grad F_{\mu_k}(x_k), \tau_kG_k\rangle+\frac{\tau_k^{2}\mathcal{G}}{2\mu_k}\|G_k\|^{2} \\
&=F_{\mu_k}(x_k)-\frac{\tau_k}{2}\| \grad F_{\mu_k}(x_k)\|^2-\frac{\tau_k}{2}\|G_k\|^2+\frac{\tau_k}{2}\|G_k- \grad F_{\mu_k}(x_k)\|^2+\frac{\tau_k^{2}\mathcal{G}}{2\mu_k}\|G_k\|^2 \\
&\leq F_{\mu_k}(x_k)-\frac{\tau_k}{2}\| \grad F_{\mu_k}(x_k)\|^2+\frac{\tau_k}{2}\|\delta_k- \grad f(x_k)\|^2, 
\end{aligned}
\end{equation}
where the last inequality uses $\tau_k \leq \frac{\mu_k}{\mathcal{G}}$.  Since $\tau_k = c_{\tau}(k+1)^{-\omega}$ and $\mu_k =  k^{-\omega}$, we can ensure this condition is met by requiring $c_{\tau} \leq 1/\mathcal{G}$. 
 It follows from  \eqref{eq:moreau-bound-2} that for all  $ x\in\mathcal{M}$
    \begin{equation*}
      	\begin{aligned}
			F_{\mu_{k+1}}(x)  & \leq F_{\mu_k}(x) + \frac{1}{2} (\frac{1}{\mu_{k+1}} - \frac{1}{\mu_k}) \text{dist}^2(c(x),\mathcal{C})\\
            & \leq F_{\mu_k}(x) + \frac{\omega}{2}k^{\omega-1}  \text{dist}^2(c(x),\mathcal{C}),
		\end{aligned}
    \end{equation*}
    where the second equality utilizes $\mu_k = k^{-\omega}$ and $(k+1)^{\omega} - k^{\omega} \leq \omega k^{\omega-1}$ by the concavity of $t^{\omega}$, where $\omega\leq \frac{1}{2}$.  
   Set $x = x_{k+1}$ in the above inequality and add it to \eqref{proof theorem1}, we  obtain
    \begin{equation}
        \label{eq:Fk+1-k-1}F_{\mu_{k+1}}(x_{k+1}) \leq F_{\mu_k}(x_k) - \frac{\tau_k}{2} \| \grad F_{\mu_k}(x_k) \|^2  +\frac{\tau_k}{2}\|\delta_k- \grad f(x_k)\|^{2} + \frac{\omega}{2}k^{\omega-1} \text{dist}^2(c(x_{k+1}),\mathcal{C}).
    \end{equation}
We construct a Lyapunov function
\begin{equation*}
\Phi_k:=\mbE[F_{\mu_k}(x_k)]+\frac {C}{\tau_{k-1}}\mbE\|\delta_k- \grad f(x_k)\|^2,
\end{equation*} where $C = \frac{1}{16\tilde{L}^2}$. Using \eqref{eq:recursive:epsilon-1} and combining with  \eqref{eq:Fk+1-k-1} yields
\begin{equation}\label{eq:sk+1-sk-1-1}
\begin{aligned}
\Phi_{k+1}   \leq & F_{\mu_k}(x_k) - \frac{\tau_k}{2} \| \grad F_{\mu_k}(x_k) \|^2  +\frac{\tau_k}{2}\|\epsilon_k\|^{2} + \frac{\omega}{2}k^{\omega-1} \text{dist}^2(c(x_{k+1}),\mathcal{C}) \\
 & +  \frac{C(1-a_{k+1})^2\big(1+4\tau_{k}^2\tilde{L}^2\big)}{\tau_k} \mbE \|\epsilon_k\|^2\\
&+\frac{4C(1-a_{k+1})^2\tau_{k}^2\tilde{L}^2}{\tau_k}\| \grad F_{\mu_{k}}(x_{k})\|^2+\frac{2Ca_{k+1}^2\sigma^2}{\tau_k} \\
\leq & \Phi_k + \underbrace{ \left(\frac{\tau_k}{2}   +  \frac{C(1-a_{k+1})^2\big(1+4\tau_{k}^2\tilde{L}^2\big)}{\tau_k}  -   \frac{C}{\tau_{k-1}}\right)}_{c_1} \|\epsilon_k\|^{2} \\
& -  \underbrace{\left( \frac{\tau_k}{2} -  4C(1-a_{k+1})^2\tau_{k}\tilde{L}^2\right)}_{c_2}\| \grad F_{\mu_{k}}(x_{k})\|^2 \\
& +\frac{\omega}{2}k^{\omega-1} \text{dist}^2(c(x_{k+1}),\mathcal{C}) + \frac{2Ca_{k+1}^2\sigma^2}{\tau_k}.
\end{aligned}
\end{equation}
Now we bound $c_1$ and $c_2$ respectively. Firstly, 
\begin{equation}
    \begin{aligned}
        c_1 & \leq \frac{\tau_k}{2} + C  (  \frac{1}{\tau_k} - \frac{1}{\tau_{k-1}} + +4\tau_k\tilde{L}^2-\frac{a_{k+1}}{\tau_k}  ) \\
        & \leq \frac{\tau_k}{2} + C(\frac{\omega k^{2\omega-1}}{c_{\tau}^2}  \tau_k+4\tau_k\tilde{L}^2-\frac{c_a}{c_\tau^2} \tau_k ) \\
        & \leq ( \frac{1}{2} +\frac{C\omega k^{2\omega-1}}{c_{\tau}^2} + 4C\tilde{L}^2 - \frac{c_a}{c_\tau^2}) \tau_k   \leq 0
    \end{aligned}
\end{equation}
where the first inequality uses the fact that $(1-a_{k+1})^2\leq 1-a_{k+1}\leq 1$, the second inequality utilizes $\frac{1}{\tau_k}-\frac{1}{\tau_{k-1}}\leq\frac{\omega}{c_{\tau}}k^{\omega-1} = \frac{\omega}{c_{\tau}^2} k^{2\omega-1} \tau_k $, the last inequality is due to $\omega \leq \frac{1}{2}$ and 
$$
c_a = c_{\tau}^2 ( \frac{1}{2} +  \frac{C}{2c_{\tau}^2} + 4C\tilde{L}^2).
$$
For $c_2$, since $C = \frac{1}{16\tilde{L}^2}$, one has that
\begin{equation}
    c_2 \geq \frac{\tau_k}{2} - 4C\tilde{L}^2 \tau_k  = \frac{\tau_k}{4}.
\end{equation}
Combining with $c_1,c_2$, and plugging \eqref{eq:bound:g} into \eqref{eq:sk+1-sk-1-1} yields
\begin{equation*}
\begin{aligned}
\Phi_{k+1}-\Phi_k 
\leq & -\frac{\tau_k}4\mbE\| \grad F_{\mu_k}(x_k)\|^{2}+ C_g\omega    k^{\omega-1-\frac{2\omega}{\theta}} + \frac{\sigma^2 c_a^2}{8\tilde{L}^2 c_{\tau}} k^{-3\omega}.
\end{aligned}
\end{equation*} 
Telescoping this result from $k=\tilde{K}, \ldots,K$ gives
\begin{equation}\label{eq:divide-rk-r1-1}
\begin{aligned}
\Phi_{K+1}-\Phi_{\tilde{K}}\leq& -\frac{1}{4}\sum_{k=\tilde{K}}^{K}\tau_k\mbE\| \grad F(x_k)\|^2+C_g\omega \sum_{k=\tilde{K}}^K   k^{\omega-1-\frac{2\omega}{\theta}} + \frac{\sigma^2 c_a^2}{8\tilde{L}^2 c_{\tau}} \sum_{k=\tilde{K}}^K k^{-3\omega}. 
\end{aligned}
\end{equation}
Rearranging the above inequality yields 
\begin{equation}\label{eq:gradF-dist-indicator-1}
     \begin{aligned}
       & \sum_{k=\tilde{K}}^K \tau_k  \mathbb{E}\| \grad F_{\mu_k}(x_k)\|^{2}  \\
      \leq & 4(\Phi_{\tilde{K}} - \Phi_{K+1}) +4C_g\omega \sum_{k=\tilde{K}}^K   k^{\omega-1-\frac{2\omega}{\theta}} + \frac{4\sigma^2 c_a^2}{8\tilde{L}^2 c_{\tau}} \sum_{k=\tilde{K}}^K k^{-3\omega} \\
      \leq & 4(\Phi_{\tilde{K}} - \Phi_{K+1}) +4C_g\omega  \max(K^{\omega-\frac{2\omega}{\theta}},1) \sum_{k=\tilde{K}}^K   k^{-1} + \frac{4\sigma^2 c_a^2}{8\tilde{L}^2 c_{\tau}} \sum_{k=\tilde{K}}^K k^{-1} \\
      \leq & 4(\Phi_{\tilde{K}} - \Phi_{K+1}) + (4C_g\omega + \frac{4\sigma^2 c_a^2}{8\tilde{L}^2 c_{\tau}}) \max\{  K^{\omega -\frac{2\omega}{\theta}},  1\}   \ln(K)
     \end{aligned}
 \end{equation}
where we uses $\omega  \geq \frac{1}{3}$ by $\theta \geq 1$.
Finally,  we give a lower bound of $\Phi_k$:
       \begin{equation}
       \begin{aligned}
          \Phi_k  & = f(x_k) + \frac{1}{2\mu_k}\mathrm{dist}^2(c(x_k),\mathcal{C}) +\frac {1}{16\tilde{L}^2\tau_{k-1}}\mbE\|\delta_k- \grad f(x_k)\|^2   \geq F_* =:\Phi_*. 
       \end{aligned}
       \end{equation}
    Combining with \eqref{eq:gradF-dist-indicator-1} completes the proof. 

        \end{proof}

\section{Conclusion}

In this work, we proposed two Riemannian stochastic smoothing algorithms for solving nonsmooth optimization problems on manifolds, tailored respectively to the cases where the nonsmooth term \( h \) is Lipschitz continuous or an indicator function of a convex set. Both algorithms leverage dynamic smoothing and online momentum-based variance reduction, and feature a single-loop structure with low per-iteration cost and constant sample size per iteration. For the Lipschitz case, our algorithm achieves the optimal iteration complexity of \( \mathcal{O}(\epsilon^{-3}) \), improving upon existing stochastic methods in the Riemannian setting. In the constrained case, under a mild error bound condition, we establish a near-optimal complexity of \( \tilde{\mathcal{O}}(\epsilon^{-\max\{\theta+2, 2\theta\}}) \). In future work, we plan to conduct numerical experiments to evaluate the performance of the proposed algorithms.

\section*{Acknowledgments}  This research is supported by the National Natural Science Foundation of China (NSFC) grants 12401419, 12071398, the Key Program of National Natural Science of China 12331011, the Major Research Plan of National Natural Science Foundation of China 92473208, the Innovative Research Group Project of Natural Science Foundation of Hunan Province of China 2024JJ1008.
\bibliographystyle{siamplain}
\bibliography{ref}
\end{document}